\newcommand{\p}{\mathfrak{p}}
\newcommand{\Hom}{\textrm{Hom}}
\newcommand{\pd}{\textrm{pd}}
\newcommand{\dm}{\textrm{dim}}
\newcommand{\Spec}{\textrm{Spec}}
\newcommand{\depth}{\textrm{depth}}
\newcommand{\link}{\mathrm{link}}
\newcommand{\tr}{\textrm{tr}}
\newcommand{\bfa}{\mathbf{a}}
\newcommand{\bfb}{\mathbf{b}}
\newcommand{\bfv}{\mathbf{v}}
\newcommand{\bfu}{\mathbf{u}}
\newcommand{\bfx}{\mathbf{x}}
\newcommand{\ZZ}{\mathbb{Z}}
\newcommand{\QQ}{\mathbb{Q}}
\newcommand{\RR}{\mathbb{R}}
\newcommand{\kk}{\Bbbk}
\theoremstyle{definition}
\newtheorem{dfn}{Definition}[section]
\newtheorem{rem}[dfn]{Remark}
\newtheorem{fact}[dfn]{Fact}
\newtheorem{ex}[dfn]{Example}
\theoremstyle{plain}
\newtheorem{Q}[dfn]{Question}
\newtheorem{thm}[dfn]{Theorem}
\newtheorem{lem}[dfn]{Lemma}
\newtheorem{prop}[dfn]{Proposition}
\begin{document}

\title{Levelness versus nearly Gorensteinness of homogeneous domains}
\author[S. Miyashita]{Sora Miyashita}
\address[S. Miyashita]{Department of Pure And Applied Mathematics, Graduate School Of Information Science And Technology, Osaka University, Suita, Osaka 565-0871, Japan}
\email{miyashita.sora@ist.osaka-u.ac.jp}
\keywords{level, nearly Gorenstein, homogeneous domain, affine semigroup rings, Stanley-Reisner rings}
\subjclass[2020]{Primary 13H10; Secondary 13M05}

\maketitle

\begin{abstract}
Levelness and nearly Gorensteinness are well-studied properties
of graded rings as a generalized notion of Gorensteinness.
In this paper, we compare the strength of these properties.
For any Cohen-Macaulay homogeneous affine semigroup
ring $R$,
we give a necessary condition for $R$
to be non-Gorenstein and nearly Gorenstein in terms of the $h$-vector of
$R$ and we show that
if $R$ is nearly Gorenstein with Cohen-Macaulay type 2,
then it is level.
We also show that if Cohen-Macaulay type is more than 2,
there are 2-dimensional counterexamples.
Moreover, we characterize nearly Gorensteinness of Stanley-Reisner rings of low-dimensional simplicial complexes.
\end{abstract}

\section{Introduction}
We denote the set of nonnegative integers, the set of integers, the set of rational numbers
and the set of real numbers
by $\mathbb{N}$, $\mathbb{Z}$, $\mathbb{Q}$ and $\mathbb{R}$, respectively.
Let $\kk$ be a field,
and let $R$ be an $\mathbb{N}$-graded $\kk$-algebra
with a unique graded maximal ideal $\mathbf{m}$.
We will always assume that $R$ is Cohen-Macaulay and admits a canonical module $\omega_R$.

Cohen-Macaulay (local or graded) rings and Gorenstein (local or graded)
rings are very important properties
and play a crucial role in the theory of commutative algebras.
For the study of a new class of local or graded rings which are Cohen-Macaulay
but not Gorenstein, many kinds of rings are defined.
For example, there are \textit{almost Gorenstein} rings, \textit{nearly Gorenstein} rings, and
\textit{level} rings.
There are also \textit{generically Gorenstein} rings and \textit{pseudo-Gorenstein} rings (see \cite[Proposition 3.3.18]{BH} and \cite{Pseudo}, respectively).

Originally, 
the notion of almost Gorenstein local rings of dimension one was introduced
by Barucci and Fr\"oberg \cite{Alm1} in the case where the local rings are
analytically unramified.
After this work, the general theory of almost Gorenstein rings
was introduced by Goto, Matsuoka, Phuong, Takahashi, and Taniguchi (see \cite{Alm2} and \cite{Alm3}).
In addition, Matsuoka and Murai \cite{MM} have studied
almost Gorenstein Stanley-Reisner rings.
For example, for a $1$-dimensional simplicial complex $\Delta$,
it was proved that $\Delta$ is an almost Gorenstein simplicial complex if and only if
$\Delta$ is a tree or a ridge sum of cycles
(see \cite[Proposition 3.8]{MM}).
On the other hand, $h$-vectors of almost Gorenstein rings are also studied by Higashitani in \cite{Alm4}.
There is a sufficient condition \cite[Theorem 3.1]{Alm4} to be almost Gorenstein in terms of $h$-vectors.

Nearly Gorenstein rings are a particularly new class of rings,
first defined by Herzog, Hibi, and Stamate in \cite{NG}.
Characterizations of nearly Gorenstein rings are known for
numerical semigroup rings, Hibi rings, edge rings of complete multipartite graphs, and
Ehrhart rings, and other things (see \cite{Trace,NG,H2,Miy1}).

Level rings were defined by Stanley (see \cite{BH} and \cite{Sta}).
It is well known that every doubly Cohen-Macaulay complex is level (see \cite[Theorem 5.7.6]{BH}).
There are also papers discussing levelness in ASL and Hibi rings (see \cite{H1} and \cite{Miy2}).

As we can see, there are many scattered notions of generalizations of Gorenstein rings.
Therefore, it is natural to compare the strength of these properties.
According to previous studies,
it is known that a 1-dimensional almost Gorenstein ring
is a nearly Gorenstein ring (see \cite[Proposition 6.1]{NG}),
and the characterization of levelness and almost Gorensteinness
of edge rings of complete multipartite graphs are given (see \cite{HM}).
In this paper, for the further
contribution,
we discuss how nearly Gorensteinness can lead to levelness under what conditions on the ring.

In Section 3, we consider some questions related to the comparison of
nearly Gorensteinness and levelness, and present many examples.
First, in the case of Cohen-Macaulay rings $R$ in general,
there are examples of rings that are nearly Gorenstein but not level (Example 3.1).
Even if we require the Cohen-Macaulay rings to be homogeneous domains,
we can find an example of a nearly Gorenstein ring, which is neither level, nor an affine semigroup ring
(Example 3.2).
These considerations lead us to the following question.

\begin{Q}
Let $R$ be a Cohen-Macaulay homogeneous affine semigroup ring.
If $R$ is nearly Gorenstein, then is $R$ level?
\end{Q}

We rephrase nearly Gorensteinness in terms of homogeneous affine semigroups,
and we answer Question 1.1.
First, we prove the following Theorem.

\newtheorem*{Mm}{\rm\bf Theorem~\ref{A1}}
\begin{Mm}
  Let $R$ be a Cohen-Macaulay homogeneous affine semigroup ring
  and $(h_0, h_1, . . . , h_s)$ its $h$-vector.
  If $R$ is nearly Gorenstein but not Gorenstein,
  then $h_s \geqq 2$.
\end{Mm}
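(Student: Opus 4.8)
The plan is to prove the contrapositive: assuming $R$ is nearly Gorenstein with $h_s = 1$, I will show that $R$ is Gorenstein. Throughout I use the combinatorial model $R = \kk[S]$, where $S$ is an affine semigroup generated in degree one, $G = \ZZ S$ is its group, and $C = \RR_{\ge 0}\, S$ is the cone; I realize the canonical module as the monomial ideal $\omega_R = \bigoplus_{\alpha \in \Omega}\kk\, t^{\alpha}$, where $\Omega = \mathrm{relint}(C)\cap G$ is the set of interior lattice points. Since $R$ is a Cohen--Macaulay domain, $\omega_R$ is isomorphic to such an ideal, $\omega_R^{-1} = (R :_Q \omega_R)$ is again a monomial fractional ideal with some support $T \subseteq G$, and $\tr(\omega_R) = \omega_R\,\omega_R^{-1}$ has support $\Omega + T$. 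The first routine step records the degree dictionary: graded local duality gives $\dim_{\kk}(\omega_R)_{d-s} = h_s$, so the hypothesis $h_s = 1$ means there is a unique interior point $c$ of minimal degree $d - s$, and $(\omega_R)_{d-s} = \kk\, t^{c}$.

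Next I would analyze the degree-one part of the trace. A degree count shows $(\omega_R^{-1})_{j} = 0$ for $j < s-d$, and that $(\omega_R^{-1})_{s-d}\ne 0$ forces $-c \in T$, i.e. $\Omega \subseteq c + S$, which is exactly Gorensteinness. So, arguing by contradiction, assume $R$ is not Gorenstein; then $(\omega_R^{-1})_{s-d} = 0$. Because $(\omega_R)_e = 0$ for $e < d-s$ and $(\omega_R^{-1})_{1-e} = 0$ for $e > d-s+1$, the only contributions to $\tr(\omega_R)_1 = \sum_e (\omega_R)_e(\omega_R^{-1})_{1-e}$ come from $e \in \{d-s,\, d-s+1\}$, and the term $e = d-s+1$ vanishes since it involves $(\omega_R^{-1})_{s-d}=0$. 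Hence
$$\tr(\omega_R)_1 = (\omega_R)_{d-s}\,(\omega_R^{-1})_{s-d+1} = t^{c}\,(\omega_R^{-1})_{s-d+1}.$$
Nearly Gorensteinness gives $\tr(\omega_R) \supseteq \mathbf{m}$, hence $\tr(\omega_R)_1 = R_1$; comparing monomial supports, every degree-one generator $g$ of $S$ satisfies $g - c \in T$, i.e. $(g-c) + \Omega \subseteq S$.

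The final step upgrades this first-order information to the global containment $\Omega \subseteq c + S$. Fix $\alpha \in \Omega$; the previous line says $(\alpha - c) + g \in S$ for every degree-one generator $g$, i.e. $t^{\alpha - c}\,\mathbf{m} \subseteq R$, so $t^{\alpha-c} \in (R :_Q \mathbf{m})$. Here the homogeneous-domain hypothesis enters decisively: I may assume $d = \dim R \ge 2$ (in dimension one a homogeneous semigroup domain is a polynomial ring, already Gorenstein), so $\depth R \ge 2$ and $\mathbf{m}$ contains a homogeneous regular sequence $y_1, y_2$. For a domain the cancellation argument—from $t^{\alpha-c}y_1,\,t^{\alpha-c}y_2\in R$ and $(y_1)\!:_R\! y_2 = (y_1)$ one gets $t^{\alpha-c}y_1 \in y_1 R$, then cancels $y_1$—yields $t^{\alpha-c}\in R$, that is $(R:_Q\mathbf{m}) = R$. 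Therefore $\alpha - c \in S$ for every $\alpha \in \Omega$, so $\Omega = c + S$, $\omega_R = t^{c}R$ is free of rank one, and $R$ is Gorenstein, a contradiction. This proves $h_s \ge 2$.

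I expect the genuine obstacle to be precisely this last implication: nearly Gorensteinness only directly controls the degree-one slice of the trace, and hence only tells us $(g-c)+\Omega\subseteq S$ for the generators $g$, a condition that can hold without $\Omega\subseteq c+S$ for non-homogeneous or non-domain semigroups (one-dimensional numerical semigroups already exhibit this). The point is that standard-gradedness makes $\mathbf{m}$ generated in degree one, while the domain-plus-$\depth\ge 2$ hypothesis forces $(R:_Q\mathbf{m})=R$; it is the combination of these two features that closes the gap. The remaining ingredients—the Hilbert-series degree dictionary and the degree bookkeeping inside the trace—are routine.
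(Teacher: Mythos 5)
Your argument is essentially correct, but it follows a genuinely different route from the paper's. The paper first translates nearly Gorensteinness into semigroup language (Proposition 4.2) and then proves the stronger intermediate Theorem 4.3: if $V_{\min}$ is a singleton and $R$ is not Gorenstein, some extremal-ray generator escapes $\tr(\omega_S)$. That proof needs the cone-geometry Lemma 2.6 and, in the case where the difference $\bfv'-\bfv$ of two canonical generators lies inside the cone, Katth\"an's structure theory of holes together with the $(S_2)$-property of Cohen--Macaulay semigroups (Theorems 2.7 and 2.8). You bypass all of that non-normality machinery: pure $\ZZ$-graded bookkeeping in $\tr(\omega_R)=\omega_R\omega_R^{-1}$ shows that $h_s=1$ plus non-Gorensteinness forces $\tr(\omega_R)_1=t^c(\omega_R^{-1})_{s-d+1}$, nearly Gorensteinness then gives $(g-c)+\Omega\subseteq S$ for every degree-one generator $g$, and the elementary fact that $(R:_{Q(R)}\mathbf{m})=R$ once $\depth R\geq 2$ upgrades this to $\omega_R=t^cR$, a contradiction. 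Note that monomiality plays no essential role in this chain---all you use is that $(\omega_R)_{d-s}$ is one-dimensional and spanned by an element invertible in the fraction field---so your argument in fact proves the statement for \emph{every} Cohen--Macaulay homogeneous domain of dimension at least two; the affine semigroup hypothesis is needed only in the dimension-one case. This is sharper than the paper's theorem and is consistent with its Remark 4.7, whose counterexample (Example 3.2, with $h$-vector $(1,2,3,1)$) is one-dimensional. What the paper's longer route buys is the finer combinatorial information of Theorem 4.3 (extremal rays escape the trace) and Proposition 4.2, which are of independent interest.

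One blemish you should repair: the identification $\Omega=\mathrm{relint}(C)\cap G$ is the Danilov--Stanley description of the canonical module and is valid only for \emph{normal} semigroups, whereas the theorem (and the interesting examples, e.g.\ the paper's Example 4.6) concern possibly non-normal ones, for which $\omega_R$ is not the span of the interior lattice points. This is harmless precisely because you never use interiority: all your argument needs is that $\omega_R$ admits some realization as a graded (fractional) monomial ideal---the paper's Fact 2.5, shifted into the correct $\ZZ$-grading, which is possible since $\ZZ S$ contains elements of every degree---and that its inverse is again graded (the paper's Lemma 4.1, or the standard fact that colon ideals of multigraded ideals are multigraded). So define $\Omega$ as the support of that realization and drop the word ``interior''; everything else, including the one-line dimension-one case (all degree-one generators of a rank-one homogeneous semigroup coincide, so $R\cong\kk[t]$), goes through as written.
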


From Theorem 4.4, we can prove next Theorem directly.

\newtheorem*{MainTheorem}{\rm\bf Theorem~\ref{A}}
\begin{MainTheorem}
  For any homogeneous affine semigroup ring, if it is nearly Gorenstein with Cohen-Macaulay type 2, then it is level.
\end{MainTheorem}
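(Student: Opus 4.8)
The plan is to read off both the Cohen-Macaulay type and the levelness of $R$ directly from the socle of its Artinian reduction, and then feed in Theorem~\ref{A1}. Since $R$ is homogeneous (standard graded) and Cohen-Macaulay, I would choose a homogeneous linear system of parameters $\theta_1,\dots,\theta_d$ and pass to the Artinian reduction $\bar R = R/(\theta_1,\dots,\theta_d)$, whose Hilbert function is precisely the $h$-vector $(h_0,h_1,\dots,h_s)$ with $h_s\neq 0$. Two standard facts then do the work: the Cohen-Macaulay type of $R$ equals $\dim_\kk \mathrm{soc}(\bar R)$, and $R$ is level exactly when $\mathrm{soc}(\bar R)$ is concentrated in the single top degree $s$.

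The key elementary observation is the inequality $\type R \geqq h_s$. Because $\bar R$ is standard graded and vanishes in all degrees above $s$, the ideal $\mathbf m$ is generated in degree $1$, so every element of the top piece $\bar R_s$ is annihilated by $\mathbf m$; hence $\bar R_s \subseteq \mathrm{soc}(\bar R)$ and $\type R = \dim_\kk \mathrm{soc}(\bar R) \geqq \dim_\kk \bar R_s = h_s$. Moreover equality $\type R = h_s$ holds if and only if $\mathrm{soc}(\bar R) = \bar R_s$, i.e. the socle sits only in degree $s$, which is exactly the levelness of $R$. So the whole statement reduces to pinning down $h_s$.

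Now I would assume $R$ is nearly Gorenstein with $\type R = 2$. Since being Gorenstein is equivalent to type $1$, such an $R$ is not Gorenstein, so Theorem~\ref{A1} applies and gives $h_s \geqq 2$. Combining this with the inequality above yields $2 = \type R \geqq h_s \geqq 2$, forcing $h_s = 2 = \type R$. By the equality criterion of the previous paragraph, $\mathrm{soc}(\bar R) = \bar R_s$ is concentrated in the top degree $s$, and therefore $R$ is level, as desired.

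I do not expect any genuine obstacle here once Theorem~\ref{A1} is available; the result is essentially a corollary. The only points that need care are the homogeneity (standard-graded) hypothesis, which is what guarantees that $\mathbf m$ is generated in degree $1$ and hence that $\bar R_s$ lies in the socle, and the two textbook identifications of $\type R$ with the total socle dimension and of levelness with socle-concentration in the top degree. All the real content is carried by the bound $h_s \geqq 2$ of Theorem~\ref{A1}: having type exactly $2$ traps $h_s$ between $2$ and the type, collapsing the socle onto its top graded piece.
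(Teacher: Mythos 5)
Your proposal is correct and is essentially the paper's own argument: the paper's proof also reduces everything to Theorem~\ref{A1}, noting (via the same socle identifications you spell out) that a non-level ring of type $2$ would have $h_s=1$, contradicting $h_s\geqq 2$. You merely phrase the step directly ($2=\type R\geqq h_s\geqq 2$ forces socle concentration in top degree) where the paper argues by contradiction, and you make explicit the standard facts about the Artinian reduction that the paper leaves implicit.
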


That is, in the case of type 2,
we resolved Question 1.1 in the affirmative.
On the other hand,
if Cohen-Macaulay type is more than 2,
there exist counterexamples to Question 1.1.

\newtheorem*{MainTheorem2}{\rm\bf Theorem~\ref{A2}}
\begin{MainTheorem2}
  For every $3\leq d \leq 5$,
  there exists type $d$ non-level nearly Gorenstein homogeneous affine semigroup ring.
\end{MainTheorem2}
Moreover, we discuss Hilbert series of
nearly Gorenstein homogeneous affine semigroup rings
with projective dimension and Cohen-Macaulay type
both equal to 2.
We show the Hilbert Series of $R$ has nice form
in this case.
 \newtheorem*{MTheorem}{\rm\bf Theorem~\ref{A3}}
 \begin{MTheorem}
  Let $d \geqq 2$ and let $R$ be a $d$-dimensional homogeneous affine semigroup.
  If $R$ is nearly Gorenstein with
  projective dimension and Cohen-Macaulay type both equal to 2
, then
\[H(R,t)=\frac{1+2\sum_{i=1}^s{t^i}}{(1-t)^d}.\]
 \end{MTheorem}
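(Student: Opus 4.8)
The plan is to combine the Hilbert--Burch theorem with the levelness supplied by Theorem~\ref{A}, and then to extract the precise relation degrees from nearly Gorensteinness. Write $R = S/I$ with $S = \kk[x_1,\dots,x_n]$ a standard graded polynomial ring on a minimal generating set of $R$, so that $n = \dm R + 2 = d+2$: indeed $R$ is Cohen--Macaulay with $\pd_S R = 2$, which forces $\operatorname{codim} = n - d = 2$. Since $I$ is then a height-$2$ perfect ideal with $\beta_2 = \type R = 2$, its minimal graded free resolution is of Hilbert--Burch type
\[
0 \to \bigoplus_{k=1}^{2} S(-b_k) \xrightarrow{\ \phi\ } \bigoplus_{j=1}^{3} S(-a_j) \to S \to R \to 0,
\]
and comparing Hilbert series gives $H(R,t) = \big(1 - \sum_j t^{a_j} + \sum_k t^{b_k}\big)/(1-t)^{d+2}$, so the $h$-polynomial is $h(t) = \big(1 - \sum_j t^{a_j} + \sum_k t^{b_k}\big)/(1-t)^2$. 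The Hilbert--Burch degree identity reads $a_1+a_2+a_3 = b_1+b_2$, and minimality of $\phi$ (no row of its $3\times 2$ matrix may vanish, else $I$ would be principal) yields $a_j \le b_k - 1$. Note also that $h_0 = 1$ and $h_1 = n - d = 2$ come for free.

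Next I would feed in levelness. By Theorem~\ref{A}, $R$ is level, so $\omega_R$ is generated in a single degree; equivalently the last free module $\bigoplus_k S(-b_k)$ is generated in one degree, forcing $b_1 = b_2 =: c$. Reading off $H(\omega_R,t) = (-1)^d H(R,t^{-1})$ then gives $c = s+2$ and $h_s = \type R = 2$, so $a_1 + a_2 + a_3 = 2(s+2)$ with $2 \le a_j \le s+1$ for each $j$. A short computation shows $h_2 = 3 - \#\{\,j : a_j = 2\,\}$, and for $s \ge 2$ the bound $a_j \le s+1$ already excludes two or three quadratic relations (they would force $a_3 = 2s \le s+1$, i.e.\ $s \le 1$). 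Hence the whole theorem reduces to the single assertion that, for $s \ge 2$, the defining ideal $I$ carries \emph{at least one} quadratic generator: granting this, $a_1 = 2$ and then $a_2 = a_3 = s+1$ are forced, and substituting back gives $1 - t^2 - 2t^{s+1} + 2t^{s+2} = (1-t)^2\big(1 + 2\sum_{i=1}^s t^i\big)$, which is exactly the claimed series. The case $s \le 1$ is immediate from $h_0 = 1$ and $h_1 = 2$.

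The crux, and the step I expect to be hardest, is producing that quadratic relation from nearly Gorensteinness. Here I would return to the trace description underlying Theorem~\ref{A1}: non-Gorenstein nearly Gorenstein means $\tr(\omega_R) = \mathbf{m}$. Realizing $\omega_R$ as a homogeneous canonical ideal of the domain $R$ generated by its two degree-$(d-s)$ generators $u_1,u_2$, one has $\tr(\omega_R) = (\,u_i v_j\,)$, where the $v_j$ minimally generate $\Hom_R(\omega_R,R)$; the requirement that this ideal contain all of $R_1$ pins down the initial degree of $\Hom_R(\omega_R,R)$ and, transported through the dual of the Hilbert--Burch complex, forces the smallest relation degree $a_1$ to equal $2$. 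I would carry this out in the semigroup language of Section~4, where the trace condition becomes a statement about the holes of the semigroup and $u_1,u_2,v_j$ become explicit lattice points, so that a degree-$2$ syzygy of $\omega_R$ (equivalently a quadratic binomial in $I$) can be read off directly. The delicate point is guaranteeing that the \emph{low-degree} relation genuinely appears rather than only higher-degree ones, which is precisely where the full strength of $\tr(\omega_R) = \mathbf{m}$, as opposed to the weaker conclusion $h_s \ge 2$ of Theorem~\ref{A1}, must be invoked.
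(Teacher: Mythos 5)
Your reduction is correct as far as it goes, and it is clean: Auslander--Buchsbaum gives codimension $2$, Hilbert--Burch gives the shape $0\to S(-b_1)\oplus S(-b_2)\to\bigoplus_{j=1}^3 S(-a_j)\to S\to R\to 0$ with $a_1+a_2+a_3=b_1+b_2$, Theorem~\ref{A} gives levelness and hence $b_1=b_2=s+2$, $h_s=2$, and minimality gives $2\leq a_j\leq s+1$; granting one quadratic generator, the arithmetic $(a_1,a_2,a_3)=(2,s+1,s+1)$ and the claimed series follow. But the one step that carries the entire content of the theorem --- that for $s\geq 2$ the ideal $I$ actually has a generator in degree $2$ --- is never proved. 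You assert that $\tr(\omega_R)=\mathbf{m}$, ``transported through the dual of the Hilbert--Burch complex,'' forces $a_1=2$, and then you yourself flag exactly this as the delicate point; no argument is supplied. Without it the constraints you derived still permit, say, $(a_1,a_2,a_3)=(3,5,6)$ for $s=5$, and nothing in the proposal rules this out. So what you have is a correct reduction of the theorem to its crux, not a proof: the place where nearly Gorensteinness must do work beyond its consequence of levelness is precisely the place left blank.

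The gap can in fact be closed inside your own framework, using the tool the paper itself relies on, namely Theorem 2.9(b): for $r(R)=2$ and $\dim R>0$, $R$ is nearly Gorenstein if and only if the entries of the last syzygy matrix $\phi$ generate the maximal ideal $\mathbf{n}$ of $S$. After levelness, any nonzero entry in row $j$ of $\phi$ has degree $(s+2)-a_j$, so a row contributes linear forms only if $a_j=s+1$, and at most two of them; to span the $(d+2)$-dimensional space $\mathbf{n}_1$ with $d+2\geq 4$ one needs at least two rows with $a_j=s+1$, and then $a_1+a_2+a_3=2s+4$ forces the remaining generator degree to be $2s+4-2(s+1)=2$. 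The same count shows $d=3,4$ force $s=1$ and that $d\geq 5$ admits no nearly Gorenstein example, so all cases are covered uniformly. The paper argues differently: it invokes Proposition 2.11 (Peeva--Sturmfels) to get a Hilbert--Burch matrix with \emph{monomial} entries and runs a case analysis on which entries are variables, computing the resolution degrees in each case. Your degree-counting route, once completed as above, is arguably the cleaner of the two --- it never needs the monomial structure --- but as submitted it stops exactly at the step where the nearly Gorenstein hypothesis has to be invoked.
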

Lastly, we also discuss Stanley-Reisner rings of low-dimensional simplicial complexes.
\newtheorem*{ainTheorem}{\rm\bf Theorem~\ref{C}}
\begin{ainTheorem}

(a) Let $\Delta$ be a 0-dimensional simplicial complex
and let $R=\kk[\Delta]$ be the Stanley-Reisner ring of $\Delta$.
Then $R$ is nearly Gorenstein and level.

\noindent(b) Let $\Delta$ be a 1-dimensional connected simplicial complex, and
let $V$ be the set of vertices of $\Delta$.
Then the following conditions are equivalent:

(1) $\Delta$ is nearly Gorenstein;

(2) $\Delta$ is Gorenstein on the punctured spectrum;

(3) $\Delta$ is locally Gorenstein
(i.e., $\kk[\link_\Delta(\{i\})]$ is Gorenstein for every $i \in V$);

(4) $\Delta$ is a path or a cycle.

\noindent(c) Every 1-dimensional nearly Gorenstein simplicial complex is almost Gorenstein and level.
\end{ainTheorem}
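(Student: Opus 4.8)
The plan is to reduce all three parts to two standard facts: that every nearly Gorenstein ring is Gorenstein on the punctured spectrum \cite{NG}, and that for a face $\sigma\in\Delta$ the localization of $R=\kk[\Delta]$ at the monomial prime $P_\sigma=(x_i : i\notin\sigma)$ is Gorenstein if and only if $\kk[\link_\Delta\sigma]$ is Gorenstein. For (a), a $0$-dimensional complex on $n$ vertices yields the $1$-dimensional ring $R=\kk[x_1,\dots,x_n]/(x_ix_j : i<j)$. The cases $n\le 2$ give a polynomial ring or $\kk[x,y]/(xy)$, both Gorenstein, so I would assume $n\ge 3$. Reducing modulo the linear parameter $\theta=x_1+\cdots+x_n$ I would show $R/\theta R\cong\kk\oplus V$ with $\dim_\kk V=n-1$ and $V^2=0$; its socle is then $V$, concentrated in degree $1$, which gives levelness of type $n-1$. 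For nearly Gorensteinness I would either compute $\tr(\omega_R)$ and check that it equals $\mathbf{m}$, or observe that $R$ has minimal multiplicity, $e(R)=n=\dim_\kk\mathbf{m}/\mathbf{m}^2$, hence is almost Gorenstein, and then invoke that a $1$-dimensional almost Gorenstein ring is nearly Gorenstein \cite[Proposition 6.1]{NG}.

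For (b) I would establish the cycle $(1)\Rightarrow(2)\Rightarrow(3)\Rightarrow(4)\Rightarrow(1)$. The implication $(1)\Rightarrow(2)$ is the first standard fact: localizing $\tr(\omega_R)\supseteq\mathbf{m}$ at a prime $\mathfrak p\neq\mathbf{m}$ turns the trace into the unit ideal, so $R_{\mathfrak p}$ is Gorenstein. For $(2)\Rightarrow(3)$ I would feed (2) the specific primes $P_{\{i\}}=(x_j : j\neq i)$, which are strictly smaller than $\mathbf{m}$ because $R/P_{\{i\}}\cong\kk[x_i]$; the localization–link dictionary then identifies $R_{P_{\{i\}}}$ Gorenstein with $\kk[\link_\Delta\{i\}]$ Gorenstein, which is exactly $(3)$. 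For $(3)\Rightarrow(4)$ I would use that $\link_\Delta\{i\}$ is the $0$-dimensional complex on the neighbours of $i$, so by the type computation in (a) its ring is Gorenstein precisely when $\deg(i)\le 2$; a finite connected graph all of whose vertices have degree at most $2$ is a path or a cycle. Finally, for $(4)\Rightarrow(1)$, a cycle is a triangulated $1$-sphere and hence Gorenstein, while a path is a tree and hence almost Gorenstein \cite[Proposition 3.8]{MM}; either way \cite[Proposition 6.1]{NG} delivers nearly Gorensteinness.

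For (c), since the rings under consideration are Cohen–Macaulay, a $1$-dimensional nearly Gorenstein complex is pure and connected, so by (b) it is a path or a cycle. A cycle is Gorenstein, hence almost Gorenstein and level. A path on $n$ vertices is a tree, hence almost Gorenstein by \cite[Proposition 3.8]{MM}, and it has $h$-vector $(1,n-2,0)$; its Artinian reduction is therefore supported in degrees $0$ and $1$, which forces the socle into degree $1$ and so yields levelness. The step I expect to be the main obstacle is the base case (a): verifying nearly Gorensteinness of the $0$-dimensional ring is the one place where a genuine trace or almost-Gorenstein computation cannot be avoided, and reading off levelness of a path directly from its $h$-vector also needs the socle argument above. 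Stating the localization–link dictionary and the graded form of ``nearly Gorenstein implies Gorenstein on the punctured spectrum'' with enough precision is the remaining point that requires care.
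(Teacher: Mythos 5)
Your arguments for (a), for $(1)\Rightarrow(2)\Rightarrow(3)\Rightarrow(4)$ in (b), and for (c) (which only uses the direction $(1)\Rightarrow(4)$) are essentially sound and largely parallel the paper's; your Artinian-reduction proofs of levelness are a pleasant alternative to the paper's explicit generator computations. The genuine gap is in $(4)\Rightarrow(1)$, the path case, which is the real content of the theorem. You invoke \cite[Proposition 6.1]{NG}, but that result concerns rings of Krull dimension one, whereas the Stanley--Reisner ring of a $1$-dimensional complex has Krull dimension two. This is not a repairable citation slip: the principle ``almost Gorenstein implies nearly Gorenstein'' is false in dimension two, and it fails exactly for the class of rings at hand. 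Indeed, by \cite[Proposition 3.8]{MM} every tree is almost Gorenstein, so your argument would equally prove that the star with three edges is nearly Gorenstein; but its central vertex has a link consisting of three points, so the star is not locally Gorenstein and hence, by the implications $(1)\Rightarrow(2)\Rightarrow(3)$ that you yourself establish, not nearly Gorenstein. Thus the nearly Gorensteinness of the path cannot be outsourced to the almost Gorenstein literature, and some direct computation of $\tr(\omega_R)$ is unavoidable. This is precisely what the paper does in its Lemma 5.3: for the path with $n$ edges it computes the canonical ideal $I_R=(x_k x_{k+1}^2+x_{k+1}^2x_{k+2}\;;\;1\leq k\leq n-1)$ (all generators of degree $3$, giving levelness), takes the nonzerodivisor $g=\sum_{k=1}^{n}x_k^2x_{k+1}^2\in I_R$, and verifies that $x_1^2/g$, $x_{n+1}^2/g$ and $\bigl(\sum_{j}x_jx_{j+1}\bigr)/g$ lie in $I_R^{-1}$ and multiply the generators onto every variable, whence $\tr(\omega_R)=I_RI_R^{-1}\supseteq\mathbf{m}$.

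Two secondary remarks. In (a) the ring $\kk[\Delta]$ is genuinely one-dimensional, so there your route (maximal embedding dimension implies almost Gorenstein, cf. \cite{Alm1,Alm2}, then \cite[Proposition 6.1]{NG}) is legitimate in principle, though both ingredients are usually stated under an infinite-residue-field or canonical-ideal hypothesis, so you should either add a base-change remark or fall back on your stated alternative of computing $\tr(\omega_R)$ directly --- which is what the paper's Lemma 5.2 does, using the canonical ideal $(x_1-x_2,\dots,x_1-x_n)$. As the proposal stands, it establishes (a), (c), and three of the four implications in (b), but not the equivalence in (b); fixing it requires replacing the appeal to \cite[Proposition 6.1]{NG} for the path by an explicit trace computation along the lines of the paper's Lemma 5.3.
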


The structure of this paper is as follows.
In Section 2, we prepare some definitions and facts for the discussions later.
In Section 3, we give some examples about the comparison of nearly Gorensteinness and levelness.
In Section 4, we show that
for any homogeneous affine semigroup ring,
if it is nearly Gorenstein with Cohen-Macaulay type 2,
then it is level.
We also show that if Cohen-Macaulay type is more than 2,
there are 2-dimensional counterexamples.
Moreover, we discuss the Hilbert series of nearly Gorenstein homogeneous affine semigroup rings.
In Section 5, we attempt to obtain results in the case of non-domains,
especially for squarefree monomial ideals.
We characterize nearly Gorensteinness of Stanley-Reisner rings of low-dimensional simplicial complexes.

\subsection*{Acknowledgement}
I am grateful to professor
Akihiro Higashitani for his very
helpful comments and instructive discussions.
I also thank Max K\"olbl
for his help in implementing
the program to find important counterexamples.

\section{Preliminaries}
Let $\kk$ be a field,
and let $R$ be an $\mathbb{N}$-graded $\kk$-algebra
with a unique graded maximal ideal $\mathbf{m}$.
We will always assume that $R$ is Cohen-Macaulay and admits a canonical module $\omega_R$.
\begin{itemize}
\item For a graded $R$-module $M$, we use the following notation:

― Fix an integer $k$.
Let $M(-k)$ denote the $R$-module whose grading is given by $M(-k)_n=M_{n-k}$
for any $n \in \mathbb{Z}$.

Moreover, if $k>0$, we write $M^{\oplus k} = M \oplus M \oplus \cdots \oplus M$ ($k$ times).

― Let $\tr_R(M)$ be the sum of the ideals $\phi(M)$
with $\phi \in \Hom_R(M,R)$. Thus,
\[\tr_R(M)=\sum_{\phi \in \Hom_R(M,R)}\phi(M).\]
When there is no risk of
confusion about the ring we simply write $\tr(M).$
\end{itemize}

\begin{itemize}
  \item Let $r(R)$ be the Cohen-Macaulay type of $R$,
  and let $\pd(R)$ be the projective dimension of $R$.
    \end{itemize}
  
  \begin{itemize}
      \item Let $|X|$ be the cardinality of the set $X$.
  \end{itemize}

Let us recall the definitions and facts of the nearly Gorensteinness and levelness
of graded rings.

\begin{dfn}[{see \cite[Chapter III, Proposition 3.2]{Sta}}]
We say that $R$ is\;$\textit{level}$\: if all the degrees of the minimal
generators of $\omega_R$ are the same.
\end{dfn}

\begin{dfn}[{see \cite[Definition 2.2]{NG}}]
We say that $R$ is $\textit{nearly Gorenstein}$ if $\tr(\omega_R) \supseteq \mathbf{m}$.
In particular,  $R$ is nearly Gorenstein but not Gorenstein
if and only if $\tr(\omega_R) = \mathbf{m}$.
\end{dfn}

Gorenstein on the punctured spectrum is also defined here, as it will be needed later.

\begin{dfn}[{see \cite[Proposition 3.3.18]{BH}}]
We say that $R$ is $\textit{Gorenstein on the punctured}$
$\textit{spectrum}$
if $R_\p$ is Gorenstein for any $\p \in \Spec(R)\setminus \{\mathbf{m} \}$.
\end{dfn}
\begin{rem}
It is known that
$R$ is Gorenstein on the punctured spectrum if and only if $\tr(\omega_R) \supseteq \mathbf{m}^k$ for some $k \in \mathbb{N}$,
where $\mathbf{m}^0=R$ (see \cite[Proposition 2.3]{NG}).
In particular, if $R$ is nearly Gorenstein, then $R$ is Gorenstein on the punctured spectrum.
\end{rem}

Let $R$ be a ring and $I$ an ideal of $R$ containing
a non-zero divisor of $R$. Let $Q(R)$ be the total
quotient ring of fractions of $R$ and set
$\displaystyle I^{-1}:=\{x \in Q(R):xI\subset R\}.$ Then
\begin{equation}
\displaystyle \tr(I)=II^{-1}
\end{equation}
(see \cite[Lemma 1.1]{NG}).

If $R$ is an $\mathbb{N}$-graded ring, then
$\omega_R$ is isomorphic to an ideal $I_R$ of $R$ as an $\mathbb{N}$-graded module
up to degree shift if and only if $R_{\mathfrak{p}}$ is Gorenstein
for every minimal prime ideal $\mathfrak{p}$
(for example, if $R$ is a domain or a Stanley-Reisner ring).
We call $I_R$ the canonical ideal of $R$.

\begin{fact}[{see \cite[Chapter I, Section 12]{Sta}}]
Fix an integer with $n \geq  2$. If $R$ is an $\mathbb{N}^n$-graded domain, then
$\omega_R$ is isomorphic to an ideal $I_R$ of $R$ as an $\mathbb{N}^n$-graded module
up to degree shift.
\end{fact}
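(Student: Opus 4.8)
The plan is to realize $\omega_R$ as a rank-one torsion-free $\mathbb{Z}^n$-graded module and then embed it multihomogeneously as an ideal by passing to the graded fraction field and clearing denominators. As a first step I would record the structural facts about $\omega_R$. Since $R$ is Cohen-Macaulay and $\mathbb{N}^n$-graded, $\omega_R$ is a finitely generated $\mathbb{Z}^n$-graded $R$-module, and because $R$ is a domain its only associated prime is $(0)$, i.e. $\operatorname{Ass}_R(\omega_R)=\operatorname{Assh}(R)=\{(0)\}$; hence $\omega_R$ is torsion-free. Moreover it has rank one, since $\omega_R\otimes_R Q(R)\cong\omega_{Q(R)}\cong Q(R)$, where $Q(R)$ is the fraction field of $R$ and a field is Gorenstein.

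Next I would introduce the graded fraction field. Let $S$ be the multiplicative set of all nonzero $\mathbb{Z}^n$-homogeneous elements of $R$ and set $T=S^{-1}R$. Then $T$ is a $\mathbb{Z}^n$-graded ring in which every nonzero homogeneous element is a unit, that is, a graded field, so every finitely generated $\mathbb{Z}^n$-graded $T$-module is graded-free. Applying this to $\omega_R\otimes_R T$, which is finitely generated and graded, gives $\omega_R\otimes_R T\cong\bigoplus_i T(\mathbf{a}_i)$; tensoring further with $Q(R)$ shows the number of summands equals $\operatorname{rank}_R\omega_R=1$, so in fact $\omega_R\otimes_R T\cong T(\mathbf{a})$ for a single shift $\mathbf{a}\in\mathbb{Z}^n$.

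Then I would assemble the embedding. Because $\omega_R$ is torsion-free, the localization map $\omega_R\to\omega_R\otimes_R T$ is injective and $\mathbb{Z}^n$-graded, yielding a multihomogeneous injection $\omega_R\hookrightarrow T(\mathbf{a})$, equivalently $\omega_R(-\mathbf{a})\hookrightarrow T$. Choosing finitely many homogeneous generators of the image and writing each as $a_i/b_i$ with $a_i,b_i\in R$ homogeneous and $b_i\neq 0$, multiplication by the nonzero homogeneous element $b=\prod_i b_i$ carries the image into $R$. Thus, after a degree shift by $\mathbf{a}-\deg b$, we obtain a $\mathbb{Z}^n$-graded injection $\omega_R\hookrightarrow R$ whose image $I_R$ is a $\mathbb{Z}^n$-graded, hence $\mathbb{N}^n$-graded (being a submodule of $R$), ideal, giving $\omega_R\cong I_R$ up to degree shift as required.

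I expect the only genuine obstacle to be the claim that $\omega_R\otimes_R T$ is graded-free of rank one over the graded field $T$ with a \emph{single} multidegree shift: this is precisely where the $n\geq 2$ multigraded statement diverges from the classical $\mathbb{Z}$-graded case, and it rests on verifying both that inverting all nonzero homogeneous elements of the multigraded domain $R$ really produces a graded field and that the rank-one computation forces the module to be $T(\mathbf{a})$ rather than a direct sum of several shifts. The remaining steps, namely torsion-freeness giving injectivity and clearing denominators to land in $R$, are routine.
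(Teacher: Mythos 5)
The paper offers no proof of this Fact at all --- it is quoted as known, with a citation to Stanley's book \cite{Sta} --- so there is no internal argument to compare yours against; your proposal can only be judged on its own terms, and it is correct. It is, in essence, the multigraded adaptation of the standard proof that a Cohen--Macaulay ring is generically Gorenstein if and only if $\omega_R$ is isomorphic to an ideal (cf.\ \cite[Proposition 3.3.18]{BH}): for a domain, $\as(\omega_R)=\{(0)\}$ gives torsion-freeness, localizing at $(0)$ gives rank one, and the ingredient that keeps everything multihomogeneous is the passage to the graded fraction ring $T=S^{-1}R$ followed by clearing denominators. Each step checks out: $T$ is a $\mathbb{Z}^n$-graded field, finitely generated graded $T$-modules are graded-free, rank one forces a single shift $T(\mathbf{a})$, torsion-freeness makes $\omega_R\to\omega_R\otimes_R T$ injective, and multiplication by the homogeneous element $b$ carries the image into $R$ as a $\mathbb{Z}^n$-graded (hence $\mathbb{N}^n$-graded) ideal. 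Two remarks. First, the point you flag as the genuine obstacle is not actually where any subtlety resides: the freeness of graded modules over a graded field holds for an arbitrary grading group (take a maximal homogeneous linearly independent set; any nontrivial dependence relation splits into multihomogeneous slices, and a nonzero homogeneous coefficient is invertible), so the argument is completely uniform in $n$. In particular your proof never uses the hypothesis $n\geq 2$, and the conclusion is equally true for $n=1$; that hypothesis reflects how the source states the result, not a mathematical divergence. Second, the one background fact you use tacitly, and should record to make the proof self-contained, is that $\omega_R$ of a finitely generated $\mathbb{N}^n$-graded Cohen--Macaulay $\kk$-algebra carries a natural $\mathbb{Z}^n$-graded structure compatible with the underlying $\mathbb{Z}$-grading (e.g.\ realize it as $\ext_P^{\dim P-\dim R}(R,\omega_P)$ for a multigraded polynomial presentation $P\twoheadrightarrow R$), since the statement of the Fact presupposes this grading exists.
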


We recall some definitions about affine semigroups.
An \textit{affine semigroup} $S$ is a finitely generated sub-semigroup of $\mathbb{N}^d.$
Moreover, we say that $S$ is \textit{homogeneous} if all its minimal generators lie on an affine hyperplane not including origin.
Equivalently, the affine semigroup ring $\kk[S]$ is standard graded by assigning degree one
to all the monomials corresponding to the minimal generators of $S$.
In that case, we also say that $\kk[S]$ is \textit{homogeneous}.
We denote the group generated by $S$ by $\ZZ S$,
the convex cone generated by $S$ by $\RR_{\geqq 0} S\subseteq \RR^d$
and the normalization by $\overline{S}=\ZZ{S} \cap \RR_{\geqq 0} S$.
The affine semigroup $S$ is pointed
if $S\cap(-S)=\{0\}$.
We can check easily that homogeneous affine semigroup $S$ is pointed.
It is known that $S$ is pointed if and only if
the associated cone $C=\RR_{\geqq0} S$ is pointed ({see \cite[Lemma 7.12]{MS}}).
Moreover, every pointed affine semigroup $S$ has
a unique finite minimal generating set ({see \cite[Proposition 7.15]{MS}}).
Thus if $S$ is pointed, $C=\RR_{\geqq0} S$ is a finitely generated cone.
A face $F \subseteq S$ of $S$ is a subset such that
for every $a,b \in S$ the following holds:
\[a+b \in F \Leftrightarrow a \in F \text{\;and\;} b \in F.\]
The 1-dimensional faces of a pointed semigroup $S$ are called its extremal rays.
We prepare the following basic lemma for proving Theorem 4.4.
We denote $(\bfa,\bfb)$ as inner product of $\bfa, \bfb \in \RR^d$.
\begin{lem}
Let $d \geq 2$ and let $S$ be a $d$-dimensional
pointed affine semigroup, and let $C=\RR_{\geqq 0} S$.
Let $E$ be the set of extremal ray of $C$.
If $\bfx \notin C$,
then there exists $l \in E$
such that $(\bfx + l) \cap C = \emptyset$.
\begin{proof}
  Let $\{F_1,\cdots,F_n\}$ be
  the set of all facets of $C$.
  We can write $C=\bigcap_{i=1}^n
  \{\bfx \in \RR^d ; (\bfa_{F_i},\bfx) \geqq 0 \}$
  where $\bfa_{F_i} \in \RR^d$
  for all $1 \leq i \leq n$.
  Since $\bfx \notin C$,
  there exists a facet $F_i$
  such that $(\bfa_{F_i},\bfx)<0$.
  For all $\bfa \in F_i$,
  since $(\bfa_{F_i}, \bfx+\bfa)=(\bfa_{F_i}, \bfx)<0$,
  we get $\bfx+\bfa \notin C$.
  Thus $(\bfx + F_i) \cap C = \emptyset$.
  In particular,
  we choose a 1-dimensional face $l$ of $F_i$.
  Then $l \in E$ and
  $(\bfx + l) \cap C = \emptyset$.
  \end{proof}
\end{lem}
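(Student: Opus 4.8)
The plan is to exploit the fact that $C=\RR_{\geqq 0} S$ is a pointed, finitely generated polyhedral cone, so it admits an outer (half-space) description. First I would write $C=\bigcap_{i=1}^n\{y\in\RR^d : (\bfa_{F_i},y)\geqq 0\}$, where $F_1,\dots,F_n$ are the facets of $C$ and each $\bfa_{F_i}$ is an inward normal to the supporting hyperplane carrying $F_i$. The membership of these data is guaranteed by the preliminaries, since $S$ pointed forces $C$ to be a finitely generated cone. Because $\bfx\notin C$, at least one of the defining inequalities must fail, so I can fix an index $i$ with $(\bfa_{F_i},\bfx)<0$.

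The key observation is that the \emph{entire} translated facet $\bfx+F_i$ misses $C$. Indeed, every $\bfa\in F_i$ lies on the supporting hyperplane of $F_i$, hence $(\bfa_{F_i},\bfa)=0$, and therefore $(\bfa_{F_i},\bfx+\bfa)=(\bfa_{F_i},\bfx)<0$. Since this violates one of the inequalities cutting out $C$, we get $\bfx+\bfa\notin C$ for every $\bfa\in F_i$, i.e. $(\bfx+F_i)\cap C=\emptyset$. This step is purely a sign computation and carries the whole geometric content.

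To finish, I would locate an extremal ray sitting inside this facet. Because $d\geqq 2$, the facet $F_i$ is a $(d-1)$-dimensional face, so it is itself a pointed cone of positive dimension and thus contains at least one $1$-dimensional face $l$. Since a face of a face of $C$ is again a face of $C$, this $l$ is a $1$-dimensional face of $C$, i.e. $l\in E$. Then $\bfx+l\subseteq\bfx+F_i$ yields $(\bfx+l)\cap C\subseteq(\bfx+F_i)\cap C=\emptyset$, which is exactly the assertion. The main obstacle I expect is precisely this last passage from the facet to a genuine extremal ray: one must confirm that every facet of a $d$-dimensional pointed cone with $d\geqq 2$ does contain a $1$-dimensional face and that such a face is extremal for $C$ rather than merely for $F_i$. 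This is where the hypothesis $d\geqq 2$ is essential, since it forces each facet to have dimension at least one.
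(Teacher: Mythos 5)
Your proposal is correct and takes essentially the same approach as the paper's own proof: half-space description of $C$, a violated facet inequality, translation of the whole facet $F_i$, and then passing to a $1$-dimensional face $l \subseteq F_i$. The only difference is that you make explicit the small points the paper leaves tacit (that $(\bfa_{F_i},\bfa)=0$ for $\bfa \in F_i$, and that a face of a face of $C$ is a face of $C$, which is why $l \in E$), which is a welcome clarification but not a different argument.
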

We recall some important facts.

\begin{thm}[{see \cite[Theorem 3.1]{Lk}}]
  Let $S$ be a pointed affine semigroup.
  There exists a (not necessarily disjoint) decomposition
  \begin{equation}\overline{S} \setminus S = \bigcup_{i=1}^l(s_i+\ZZ F_i) \cap \RR_{\geq 0}S
  \end{equation}
  with $s_i \in \overline{S}$
  and faces $F_i$ of $S$.
  \end{thm}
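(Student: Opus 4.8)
The plan is to prove the decomposition by induction on $d=\dim S$, using a ``propagation'' principle for the holes $H:=\overline{S}\setminus S$ together with the module-finiteness of $\overline{S}$ over $S$. Write $C=\RR_{\geqq 0}S$; for a face $F$ of $S$ put $S_F=S\cap F$ and let $\ZZ F$ be the group it generates, so that $\ZZ F=S_F-S_F$. The base case $d=0$ is trivial, since then $S=\overline{S}$ and $H=\emptyset$. For the inductive step the goal is to cover $H$ by finitely many sets of the form $(s+\ZZ F)\cap C$, and the two things I must control are (i) that such sets really lie inside $H$, and (ii) that finitely many of them suffice.

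First I would record the elementary up-set property: since $S+S\subseteq S$, if $x+f\in S$ for some $f\in S$ then $x+f+f'\in S$ for all $f'\in S$. From this follows the key propagation lemma: if $h\in H$ satisfies $h+S_F\subseteq H$ for some face $F$, then $(h+\ZZ F)\cap C\subseteq H$. Indeed, for $z\in\ZZ F$ with $h+z\in C$ we have $h+z\in\ZZ S\cap C=\overline{S}$; and if $h+z$ were in $S$, writing $z=a-b$ with $a,b\in S_F$ would give $h+a=(h+z)+b\in S$, contradicting $h+a\in h+S_F\subseteq H$. Thus every ``good slab'' $(h+\ZZ F)\cap C$ arising from a hole with $h+S_F\subseteq H$ is contained in $H$, which settles (i) and reduces everything to the finiteness statement (ii).

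For (ii) I would separate the holes according to whether they propagate along a nonzero face. Fix a nonzero face $F$ and form the localization $\Lambda_F=S+\ZZ F$; its cone is $C+\RR F$ with lineality space $\RR F$, so the projection $\pi_F\colon\ZZ S\to\ZZ S/\ZZ F$ sends $\Lambda_F$ to a pointed affine semigroup of dimension $d-\dim F<d$ whose cone $\pi_F(C)$ is pointed because $F$ is a face. If $h\in H$ and $h+S_F\subseteq H$, then $(h+\ZZ F)\cap S=\emptyset$, i.e. the flat $h+\RR F$ misses $S$; this says precisely that $\pi_F(h)$ is a hole of $\pi_F(\Lambda_F)$, and conversely every hole of $\pi_F(\Lambda_F)$ lifts back to such an $h$. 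Applying the induction hypothesis to $\pi_F(\Lambda_F)$ decomposes these projected holes into finitely many slabs, whose preimages are finitely many good slabs $(s_i+\ZZ F_i)\cap C$ with $F_i\supseteq F$. Ranging over the finitely many nonzero faces $F$ accounts, via the propagation lemma, for every hole that propagates along some nonzero face.

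It remains to handle the isolated holes, those lying in no positive-dimensional good slab, and to show there are only finitely many of them; this is the step I expect to be the main obstacle. Here I would use that $\overline{S}$ is module-finite over $S$, so the conductor $\mathfrak{c}=\{x\in\overline{S}:x+\overline{S}\subseteq S\}$ is a nonempty ideal contained in $S$. For $c\in\mathfrak{c}$ one checks that no hole can lie in $c+C$ (else $h=c+(h-c)\in c+\overline{S}\subseteq S$), so $H\subseteq C\setminus(c+C)$; an isolated hole moreover re-enters $S$ after finitely many steps along each of the finitely many extremal rays, by the up-set property, and combining this with the bound coming from $\mathfrak{c}$ confines the isolated holes to a bounded region of $C$, hence to a finite set, each contributing the singleton slab $(\{h\}+\ZZ\{0\})\cap C=\{h\}$. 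Making the ``combining'' precise, equivalently proving that the complement of a monomial ideal in $S$ meets any bounded-recession region in finitely many points, is the delicate part; an alternative route is to invoke the structure theory of $\ZZ^d$-graded modules over $\kk[S]$ applied to $\kk[\overline{S}]/\kk[S]$, whose support is exactly $H$. Assembling the finite isolated set with the finitely many slabs coming from the nonzero faces then yields the desired finite decomposition of $H$.
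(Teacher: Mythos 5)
You should first be aware that the paper contains no proof of this statement: it is Theorem 2.7 there, quoted from Katth\"an \cite{Lk}, so your attempt can only be measured against the argument in \cite{Lk}. That argument is module-theoretic: since the normalization $\overline{S}$ is a finitely generated $S$-module, $\kk[\overline{S}]/\kk[S]$ is a finitely generated $\ZZ^d$-graded $\kk[S]$-module whose nonzero degrees are exactly the holes $H=\overline{S}\setminus S$; filtering it by shifted cyclic quotients $(\kk[S]/P_{F_i})(-s_i)$ (the $\ZZ^d$-graded primes of $\kk[S]$ being exactly the face primes $P_F$) gives $H=\bigcup_i(s_i+F_i)$, and each piece $s_i+F_i$ is then enlarged to $(s_i+\ZZ F_i)\cap\RR_{\geq 0}S$ by precisely the propagation lemma you prove in your second paragraph. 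So your propagation lemma is correct and is genuinely half of the proof; what you are missing is that all of the finiteness comes from Noetherianity, not from the geometry of the cone.

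The genuine gap is the step you yourself flag: finiteness of the set of isolated holes. Your conductor bound $H\subseteq C\setminus(c+C)$ is correct, but for $\dim S\geq 2$ the set $C\setminus(c+C)$ is an \emph{unbounded} neighborhood of the boundary of $C$, and your ray-by-ray ``re-entry'' bounds are pointwise in the hole $h$, with no uniformity over $H$; nothing in your sketch combines them into a bounded region, and no purely pointwise argument can, because the required uniformity is exactly a Noetherian statement: the isolated holes are the degrees supporting $H^0_{\mathfrak{m}}\bigl(\kk[\overline{S}]/\kk[S]\bigr)$, which form a finite set because that module is finitely generated and annihilated by a power of the graded maximal ideal, hence of finite length. (So the ``alternative route'' you defer to is not an alternative but the missing ingredient.) A second, unacknowledged defect sits in your inductive step: $\ZZ S/\ZZ F$ may have torsion --- e.g.\ $S=\langle(2,0),(1,1),(1,2)\rangle$ with $F=\NN(2,0)$ gives $\ZZ S/\ZZ F\cong\ZZ/2\oplus\ZZ$ --- so $\pi_F(\Lambda_F)$ need not be an affine semigroup and your induction hypothesis does not apply to it as stated; this is repairable (work with monoids in arbitrary finitely generated abelian groups, or track the finitely many cosets of $\ZZ F$ inside its saturation), but as written the induction does not close.
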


  A set
  $s_i+\ZZ F_i$ from (2) of called
  a $j$-dimensional family of holes,
  where $j$ is the dimension of $F_i$.
  
  \begin{thm}[{see \cite[Theorem 5.2]{Lk}}]
  Let $S$ be a pointed affine semigroup of dimension $d$.
Then $\kk[S]$ satisfies Serre's condition $(S_2)$
if and only if
every family of holes has dimension $d-1$.
    \end{thm}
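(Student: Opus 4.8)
The plan is to pass to the normalization $\overline{R}=\kk[\overline{S}]$ and read Serre's condition $(S_2)$ off the associated primes of the cokernel module $C=\overline{R}/R$, whose multigraded structure is exactly what the hole decomposition (2) records. Since $S$ is pointed, $R=\kk[S]$ is a domain, and $\overline{S}=\ZZ S\cap\RR_{\geq0}S$ is a normal affine semigroup, so by Hochster's theorem $\overline{R}$ is Cohen–Macaulay, in particular $(S_2)$; moreover $\overline{R}$ is module-finite over $R$, so $C=\overline{R}/R$ is a finitely generated torsion $R$-module with $\kk$-basis $\{t^a : a\in\overline{S}\setminus S\}$. The first step is to prove the purely module-theoretic criterion: $R$ satisfies $(S_2)$ if and only if every $\p\in\as_R(C)$ has height $1$. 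Fixing $\p$ with $\Ht\p\geq 2$ and localizing the short exact sequence $0\to R_\p\to\overline{R}_\p\to C_\p\to 0$, the fact that $\overline{R}$ is Cohen–Macaulay of local dimension $\Ht\p\geq 2$ gives $\depth\overline{R}_\p\geq 2$, hence $H^0_{\p R_\p}(\overline{R}_\p)=H^1_{\p R_\p}(\overline{R}_\p)=0$; the long exact sequence in local cohomology then yields $H^1_{\p R_\p}(R_\p)\cong H^0_{\p R_\p}(C_\p)$. Thus $\depth R_\p\geq 2$ holds iff $H^0_{\p R_\p}(C_\p)=0$, i.e. iff $\p\notin\as_R(C)$; since a domain automatically satisfies $(S_1)$, this establishes the criterion.

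Next I would translate this criterion into combinatorics. Because $C$ is $\ZZ S$-graded, all of its associated primes are multigraded, and the multigraded primes of $\kk[S]$ are precisely the face primes $\p_F=(t^a : a\in S\setminus F)$ attached to faces $F$ of $S$, with $\dim\kk[S]/\p_F=\dim F$ and therefore $\Ht\p_F=d-\dim F$. Combined with the first step, this shows that $R$ satisfies $(S_2)$ if and only if every face $F$ with $\p_F\in\as_R(C)$ has $\dim F=d-1$.

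The remaining task is to match $\as_R(C)$ with the faces $F_i$ occurring in a decomposition (2). For a family $s_i+\ZZ F_i$, the class of $t^{s_i}$ in $C$ is annihilated exactly by the monomials that push it out of the family into $S$: translating by $t^b$ with $b\in S\cap F_i$ keeps the class among the holes of the same family, while $b\notin F_i$ eventually lands in $S$ and kills it. This identifies $\p_{F_i}$ as the relevant annihilator prime, so $\p_{F_i}\in\as_R(C)$ and the family has dimension $\dim F_i=d-\Ht\p_{F_i}$; conversely every associated prime, being multigraded, equals $\p_F$ for a face $F$ carrying a nonzero socle element of $C$, which produces a family of holes of dimension $\dim F$. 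Putting these together, ``every family of holes has dimension $d-1$'' is equivalent to ``every associated prime of $C$ has height $1$,'' which by the previous two steps is equivalent to $(S_2)$.

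The \emph{main obstacle} is precisely this last identification: making rigorous the bijection between the families $F_i$ of the decomposition and the primes in $\as_R(C)$. One must take the decomposition (2) to be irredundant, so that no spurious low-dimensional family appears, and must verify that each genuine family $s_i+\ZZ F_i$ really furnishes an honest socle element of $C$ along $F_i$ (rather than one absorbed by a higher-dimensional family), so that dimensions match in both directions. Controlling how translation within and out of a hole family realizes $\p_{F_i}$ as an associated prime — and excluding families of dimension $<d-1$ whenever $(S_2)$ holds — is the combinatorial heart of the argument, while the reduction to associated primes via normalization and local cohomology is essentially formal.
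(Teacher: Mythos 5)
This statement is not proved in the paper at all: it is quoted verbatim from Katth\"an \cite[Theorem 5.2]{Lk}, so there is no in-paper argument to compare against and your proposal has to stand on its own. Its first two reductions do stand, and are standard: $(S_2)$ for the domain $R=\kk[S]$ is equivalent to every prime in $\as_R(C)$, $C=\overline{R}/R$, having height $1$ (your local cohomology argument is fine, modulo noting that $\depth_{R_\p}\overline{R}_\p\geq 2$ uses $\Ht Q=\Ht \p$ for primes $Q$ of $\overline{R}$ lying over $\p$, which holds by the dimension formula since $R\subseteq\overline{R}$ is finite and birational); and the associated primes of the $\ZZ S$-graded module $C$ are face primes $\p_F$, each the annihilator of the class of a monomial $t^a$ with $a$ a hole.

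The genuine gap is the step you yourself flag as the main obstacle, and the mechanism you propose for it is false. You claim that for a family $s_i+\ZZ F_i$, every monomial $t^b$ with $b\in S\setminus F_i$ ``eventually lands in $S$'' and kills the class of $t^{s_i}$, so that $\mathrm{Ann}_R(t^{s_i}+R)=\p_{F_i}$. This fails whenever the holes contain a second family through the base point, which irredundancy does not prevent: for $R=\kk[x^2,x^3,y^2,y^3]$ the holes are the two $1$-dimensional families $\{(1,b):b\geq 0\}$ and $\{(a,1):a\geq 0\}$, and both may be written with base point $s_i=(1,1)$ while remaining an irredundant decomposition; then $x^{2n}\cdot xy=x^{2n+1}y$ is a hole for every $n$, so $x^2$ (which is not on the face) never annihilates $xy+R$, and in fact $\mathrm{Ann}_R(xy+R)=\p_F\cap\p_{F'}$ is not prime. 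What the argument actually needs is: (i) in an irredundant decomposition, choose in each family a hole $a$ lying in \emph{no other} family, and prove that $\p_{F_i}$ is a \emph{minimal} prime over $\mathrm{Ann}_R(t^a+R)$ --- if $\mathrm{Ann}_R(t^a+R)\subseteq\p_G$ for a face $G\supsetneq F_i$, then all of $a+(G\cap S)$ consists of holes, and a pigeonhole argument over the finitely many families forces some $F_j\supseteq G$, hence forces $a$ into family $j\neq i$, a contradiction --- so that $\p_{F_i}\in\as_R(C)$; (ii) for the implication ``$(S_2)\Rightarrow$ all families have dimension $d-1$'' one must additionally produce a \emph{finite} decomposition into facet families, e.g.\ by showing the degrees of the image of $C$ in the homogeneous localization at $\p_F$ meet only finitely many cosets of $\ZZ F$. (The easy converse direction is salvageable in one line: if $\p_G=\mathrm{Ann}_R(t^a+R)$ is associated and $a$ lies in a family with $F_i$ a facet, then $F_i\cap S\subseteq G$, so $\Ht\p_G\leq 1$.) None of this is formal bookkeeping --- it is the actual content of the theorem --- and your write-up acknowledges it but does not supply it, so the proof is incomplete at exactly the decisive step.
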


\begin{thm}[{see \cite[Corollary 3.2 and Corollary 3.5]{NG}}]
  Let $S=\kk[x_1,\cdots,x_n]$ be a polynomial ring, let $\mathbf{n}=(x_1,\cdots,x_n)$ be the graded maximal ideal of $S$ and let
  \[\mathbb{F}:0 \rightarrow F_p \xrightarrow{\phi_p} F_{p-1} \rightarrow \cdots \rightarrow F_1 \rightarrow F_0 \rightarrow R \rightarrow 0\]
  be a graded minimal free $S$-resolution of the Cohen-Macaulay ring $R=S/J$ with
  $J \subseteq \mathbf{n}^2$.
  Let $I_1(\phi_p)$ be an ideal of $R$ generated by all components of
  representation matrix of $\phi_p$.
  Then the following holds.

  (a)  Let $e_1,\cdots,e_t$ be a basis of $F_p$.
  Suppose that for $i=1,\cdots,s$
  the elements $\sum_{j=1}^t r_{ij}e_j$ generate
  the kernel of
  \[\psi_p \;;\; F_p \otimes R \longrightarrow F_{p-1} \otimes R,\]
  where
  \[\psi_p=\phi_p \otimes R.\]
  Then $\tr(\omega_R)$ is generated by the elements $r_{ij}$ with $i=1,\cdots,s$ and $j=1,\cdots,t$.

  (b) If $r(R)=2$ and
  $\dm R>0$,
  then $I_1(\phi_p)=\mathbf{n}$ if and only if $R$ is nearly Gorenstein.
  \end{thm}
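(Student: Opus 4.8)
The plan is to extract everything from the dual of the minimal free resolution $\mathbb{F}$ and to reduce both $\tr(\omega_R)$ and $I_1(\phi_p)$ to data of the single map $\phi_p$. Since $R=S/J$ is Cohen-Macaulay of codimension $p=\pd_S R$, dualizing $\mathbb{F}$ into $S$ leaves only one nonzero cohomology module and yields $\omega_R\cong\operatorname{coker}(\phi_p^{*})$ up to a degree shift. Minimality of $\mathbb{F}$ forces $\operatorname{im}(\phi_p^{*})\subseteq\mathbf{n}F_p^{*}$, so the images of a basis $e_1^{*},\dots,e_t^{*}$ of $F_p^{*}$ form a minimal generating set of $\omega_R$ with $\mu(\omega_R)=t=r(R)$, and the relations among these generators are precisely the rows of $\phi_p$.

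For part (a) I would apply $\Hom_R(-,R)$ to the presentation $F_{p-1}^{*}\otimes R\xrightarrow{\phi_p^{*}\otimes R}F_p^{*}\otimes R\to\omega_R\to 0$. Because finite free modules are reflexive and $\phi_p^{*}\otimes R$ is the $R$-dual of $\psi_p=\phi_p\otimes R$, this identifies $\Hom_R(\omega_R,R)$ with $\ker(\psi_p)\subseteq F_p\otimes R$. Under the standard pairing of $F_p$ with $F_p^{*}$, the homomorphism attached to $v=\sum_j c_j e_j\in\ker(\psi_p)$ is well defined on $\omega_R=\operatorname{coker}(\psi_p^{*})$ (since $\langle \psi_p^{*}(w),v\rangle=\langle w,\psi_p(v)\rangle=0$) and sends the $k$-th generator of $\omega_R$ to the coordinate $c_k$. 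Hence, by the formula $\tr(\omega_R)=\sum_{\phi}\phi(\omega_R)$, the trace is exactly the ideal generated by all coordinates of all elements of $\ker(\psi_p)$, that is, by the $r_{ij}$; this is (a).

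For part (b) we have $t=2$, so $\psi_p$ has an $m\times 2$ matrix whose entries generate $I_1(\phi_p)$, and since $r(R)=2\neq 1$ the ring is not Gorenstein, so nearly Gorensteinness is equivalent to $\tr(\omega_R)=\mathbf{m}$. The plan is to prove the sharper equality $\tr(\omega_R)=I_1(\phi_p)$, from which the stated equivalence is immediate (both ideals lie in $\mathbf{m}$, the first because $R$ is not Gorenstein and the second by minimality, so one equals $\mathbf{m}$ iff the other does). As a first, softer step one checks that the two ideals cut out the same closed subset of $\Spec(R)$, namely the non-Gorenstein locus: on the one hand $R_{\p}$ is Gorenstein iff $\tr(\omega_R)\not\subseteq\p$ (the trace of the canonical module cuts out the non-Gorenstein locus, cf. Remark 2.4 and \cite{NG}); on the other hand, since $\omega_R$ is presented by two generators, $I_1(\phi_p)=\operatorname{Fitt}_1(\omega_R)$, and, using that $R$ is generically Gorenstein (automatic in the intended applications, where $R$ is a domain or a Stanley--Reisner ring, so that $\omega_R$ has rank one), $R_{\p}$ is Gorenstein iff $(\omega_R)_{\p}$ is free of rank one iff $I_1(\phi_p)\not\subseteq\p$. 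This already gives $\sqrt{\tr(\omega_R)}=\sqrt{I_1(\phi_p)}$.

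The main obstacle is to upgrade this equality of radicals to the exact equality $\tr(\omega_R)=I_1(\phi_p)$, and this is where the hypothesis $r(R)=2$ must enter essentially: for an arbitrary $m\times 2$ matrix $M$ the content of $\ker(M)$ and the content of $\operatorname{im}(M^{t})$ need not coincide, so the equality cannot be pure linear algebra and must use that $\phi_p$ is the tail of a minimal free resolution of a Cohen-Macaulay ring (for instance via the Buchsbaum--Eisenbud acyclicity structure, equivalently the identification $\ker(\psi_p)=\operatorname{Tor}^S_p(R,R)$). Concretely, I would first settle the codimension-two Hilbert--Burch case, where $\phi_p$ is a $3\times 2$ matrix, its $2\times 2$ minors generate $J$, and the three $R$-syzygies of the columns of $\phi_p$ turn out to be, up to sign, the rows of $\phi_p$ themselves, so that $\tr(\omega_R)=I_1(\phi_p)$ on the nose; the general codimension would then be reduced to this by localizing at the primes $\p\neq\mathbf{m}$ and invoking the coincidence of radicals established above. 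I expect this passage from equal radicals to equal ideals to be the genuinely delicate point, and also the point beyond which the sharp behaviour breaks down for higher type, cf. Theorem~\ref{A2}.
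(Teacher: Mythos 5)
A preliminary remark: the paper itself contains no proof of this statement---it is Theorem~2.9, quoted with the citation \cite[Corollaries 3.2 and 3.5]{NG}---so your attempt can only be compared with the argument in that reference. Your part (a) is correct and is essentially that argument: since $R$ is Cohen--Macaulay of codimension $p$, dualizing the minimal resolution gives $\omega_R\cong\operatorname{coker}(\phi_p^{*})$ up to shift; applying $\Hom_R(-,R)$ to the induced presentation and using left-exactness identifies $\Hom_R(\omega_R,R)$ with $\ker(\psi_p)$; the homomorphism attached to $v=\sum_j c_je_j$ sends the $k$-th generator of $\omega_R$ to $c_k$; hence $\tr(\omega_R)$ is the ideal generated by all coordinates of elements of $\ker(\psi_p)$, i.e.\ by the $r_{ij}$.

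Part (b), however, has a genuine gap, and it sits exactly where you predicted it would. The target equality $\tr(\omega_R)=I_1(\phi_p)$ is the right one, and the radical equality via the non-Gorenstein locus is fine, but your proposed upgrade cannot work: once both ideals have radical $\mathbf{m}$, localizing at any $\p\neq\mathbf{m}$ turns both into the unit ideal, so ``reducing the general codimension to the Hilbert--Burch case by localizing at $\p\neq\mathbf{m}$'' yields no information at all, and two $\mathbf{m}$-primary ideals with the same radical are of course far from equal. Moreover, even your base case is incomplete: the rotated rows of $\phi_p$ do lie in $\ker(\psi_p)$ when the $2\times 2$ minors generate $J$, but you still must show they \emph{generate} the kernel, which is precisely the point at issue. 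The missing idea (and the actual mechanism in \cite{NG}) is generic Gorensteinness, which you may not simply assume but which follows from \emph{either} side of the equivalence when $\dim R>0$: near-Gorensteinness gives Gorensteinness on the punctured spectrum, while $I_1(\phi_p)=\mathbf{m}$ gives $\operatorname{Fitt}_1(\omega_R)\not\subseteq\p$ for every minimal prime $\p\neq\mathbf{m}$, so $\omega_{R_\p}$ is cyclic and $R_\p$ is Gorenstein. Once $\omega_R\cong I_R=(g_1,g_2)\subseteq R$ with $I_R$ containing a nonzerodivisor $f=\alpha g_1+\beta g_2$, the map $x\mapsto(xg_2,-xg_1)$ is an isomorphism from $I_R^{-1}\cong\Hom_R(\omega_R,R)$ onto the syzygy module of $(g_1,g_2)$ (surjectivity: the syzygy $(r_1,r_2)$ comes from $x=(\beta r_1-\alpha r_2)/f$). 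Since that syzygy module is generated by the rows of $\phi_p$ modulo $J$, the coordinates of $\ker(\psi_p)$ generate exactly $I_1(\phi_p)$; combined with (a) this gives $\tr(\omega_R)=I_1(\phi_p)$ in every codimension at once, with no Hilbert--Burch reduction and no localization, and (b) follows because type $2$ excludes the Gorenstein case while minimality of the resolution gives $I_1(\phi_p)\subseteq\mathbf{m}$.
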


We state the necessary results about the minimal free resolution
of the codimension 2 lattice ideal based on \cite{IP}.

\begin{dfn}
Let $S=\kk[x_1,\cdots,x_n]$ be a polynomial ring and let
$L$ be any sublattice of $\mathbb{Z}$.
We put
$\bfx^{\bfa} := {x_1}^{a_1}{x_2}^{a_2}\cdots {x_n}^{a_n}$
where $\bfa=(a_1,a_2, \cdots, a_n) \in \mathbb{N}^n.$
Then its associated \textit{lattice ideal} in $S$ is
\[I_L := (\bfx^\bfa-\bfx^\bfb \;;\; \bfa,\bfb \in \mathbb{N}^n \;\;\textit{and} \;\;\bfa-\bfb \in L ).\]
Prime lattice ideals are called \textit{toric ideals}.
Prime binomial ideals and toric ideals are identical ({see \cite[Theorem 7.4]{MS}}).
\end{dfn}

\begin{prop}[{see \cite[Comments 5.9 (a) and Theorem 6.1 (ii)]{IP}}]
Let $S=\kk[x_1,\cdots,x_n]$ be a polynomial ring.
If $I$ is a codimension $2$ lattice ideal of $S$
and the number of minimal generators of $I$
is 3, then
$R=S/I$ is Cohen-Macaulay and
the graded minimal free resolution of $R$ is the following form.
\[0\rightarrow S^{2} \xrightarrow {\left[
\begin{array}{cc}
u_1 & u_4 \\
u_2 & u_5 \\
u_3 & u_6
\end{array}
\right]
}
S^{3} \rightarrow
 S \rightarrow R \rightarrow 0,\]
where $u_i$ is a monomial of $S$ for all $1 \leq i \leq 6$.
\end{prop}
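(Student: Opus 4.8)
The plan is to realize $R$ as $S/I_2(\phi)$ for a suitable $3\times 2$ matrix $\phi$ with monomial entries and then apply the Hilbert--Burch theorem, which delivers both the Cohen--Macaulayness and the asserted shape for free; the genuinely substantive input is the construction of $\phi$ from the combinatorics of the lattice. Before that, it is worth recording the numerics that make the shape plausible. Since $I$ is a lattice ideal we may take its three minimal generators to be binomials $f_i = \bfx^{\bfa_i^+}-\bfx^{\bfa_i^-}$, so $F_1 = S^3$ in any minimal free resolution $\cdots \to F_1 \to S \to R \to 0$. If we can exhibit a free resolution of length $2$, then $\pd R = 2$, and the Auslander--Buchsbaum formula gives $\depth R = n-2 = \dm R$, so $R$ is Cohen--Macaulay; comparing ranks (the $S$-module $R$ has rank $0$, as $\dm R < \dm S$) forces the last free module to have rank $3-1 = 2$, matching $0 \to S^2 \to S^3 \to S \to R \to 0$.

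The hard part will be producing a $3\times 2$ matrix $\phi$ with monomial entries $u_1,\dots,u_6$ whose three maximal $2\times 2$ minors are, up to sign, exactly $f_1,f_2,f_3$. Here I would use that $I$ comes from a \emph{rank-$2$} lattice $L$ (codimension $2$), so the degree classes of $L$ organize the monomials of $S$ into a planar ``staircase'' pattern. A codimension-$2$ lattice ideal has a minimal free resolution supported on a planar graph attached to this pattern, and the edges of that graph record the syzygies between consecutive generators; because the resolution is supported on a $1$-dimensional cell structure, the entries of its differential are forced to be monomials. I expect this combinatorial analysis of the rank-$2$ lattice to be the main obstacle: everything downstream is formal.

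Granting the monomial matrix $\phi$, the rest goes through Hilbert--Burch (see \cite{BH}). First note the construction is self-consistent: if $\phi$ has monomial entries, then each maximal minor is of the form $u_iu_j-u_ku_l$, a difference of two monomials and hence automatically a binomial, consistent with its being (a sign times) one of the $f_i$; one checks these three signed minors are precisely $f_1,f_2,f_3$, so $I = I_2(\phi)$ and the grade of $I_2(\phi)$ equals $\Ht I = 2$. This is exactly the Hilbert--Burch format, with source $S^2$ and target $S^3$ differing by one free summand. The converse form of the theorem then asserts that, since the grade of $I_2(\phi)$ attains its maximal value $2$, the complex
\[
0 \to S^2 \xrightarrow{\ \phi\ } S^3 \xrightarrow{\ \psi\ } S \to R \to 0,
\]
where $\psi$ lists the signed maximal minors of $\phi$, is a free resolution of $R = S/I_2(\phi)$.

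Finally I would confirm that this resolution is the \emph{minimal} one. The entries of $\psi$ are the binomial generators $f_i$, which lie in $\mathbf{m}$, and the entries $u_j$ of $\phi$ are non-constant monomials (the syzygies are minimal, so no entry is a unit); hence no unit occurs in either differential and the resolution is minimal. A resolution of length $2$ forces $\pd R = 2$, so $R$ is Cohen--Macaulay by Auslander--Buchsbaum, completing the proof. In summary, the shape and the Cohen--Macaulay conclusion are formal consequences of Hilbert--Burch once $I$ is written as the ideal of maximal minors of a monomial matrix, and the real content is establishing that such a monomial matrix exists, which rests on the planar structure of codimension-$2$ lattices.
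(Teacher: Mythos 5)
Your formal scaffolding is fine, but the proof has a hole exactly where you yourself locate the ``real content,'' and that hole is the entire theorem. Note first that the paper does not prove this proposition at all: it quotes it from Peeva--Sturmfels \cite{IP} (Comments 5.9(a) and Theorem 6.1(ii)). What \cite{IP} proves there is precisely the step you defer, namely that for a rank-$2$ lattice $L$ the ideal $I_L$, when it is not a complete intersection, equals the ideal of $2\times 2$ minors of a $3\times 2$ matrix with \emph{monomial} entries --- equivalently, that its minimal free resolution is cellular, supported on a planar complex built from the lattice. Your paragraph beginning ``Here I would use that $I$ comes from a rank-$2$ lattice'' invokes this planar/cellular structure as if it were an available fact, but it is not something one may quote blind: it is the main structural theorem of \cite{IP}, and its proof (construction of the relevant cell complex from $L$, and the verification that the associated complex of free modules is exact and minimal) is the technical heart of that paper. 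Without that construction you have no matrix $\phi$, and everything downstream --- Hilbert--Burch, Cohen--Macaulayness via Auslander--Buchsbaum, minimality --- has nothing to apply to. The fact that the syzygy entries are forced to be monomials is genuinely special to codimension $2$ (it fails already for codimension-$3$ toric ideals), so any proof must use the rank-$2$ hypothesis in an essential, nontrivial way; you correctly sense this but do not supply it.

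The downstream portion of your argument is correct and worth keeping straight: granting a $3\times 2$ monomial matrix $\phi$ with $I=I_2(\phi)$, the converse Hilbert--Burch theorem (\cite{BH}, Theorem 1.4.17) gives exactness of $0\to S^2\xrightarrow{\phi} S^3\to S\to R\to 0$ because $\mathrm{grade}\, I_2(\phi)=\mathrm{ht}\, I=2$ in the Cohen--Macaulay ring $S$; Auslander--Buchsbaum then gives $\mathrm{depth}\, R=n-2=\dim R$, so $R$ is Cohen--Macaulay; and minimality holds since a unit entry in $\phi$ could be cancelled, leaving $I$ with only two minimal generators, a contradiction. Two small repairs: you should justify that the three minimal generators can be taken to be binomials (graded Nakayama with respect to the $\mathbb{Z}^n/L$-grading of a lattice ideal), and your opening rank-count is redundant once Hilbert--Burch produces the resolution. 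As it stands, though, your proposal is a correct \emph{reduction} of the proposition to the cited result of \cite{IP}, not a proof of it.
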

Note that a codimension 2 prime binomial ideal $I$ is Cohen-Macaulay
but not Gorenstein if and only if the number of
minimal generators of $I$ is 3 ({see \cite[Remark 5.8 and Theorem 6.1]{IP}}).
\section{Examples: nearly Gorensteinness versus levelness}
The following example show that in the case of non-domains,
nearly Gorensteinness does not imply levelness.
\begin{ex}
Let $S=\QQ[x,y,z]$ be a graded polynomial ring with
$\deg x = \deg y=\deg z=1.$
Consider a homogeneous ideal $I=(xz,yz,y^3)$ and define $R=S/I$,
then the graded minimal free resolution of $R$ is as follows.
\[0\rightarrow S(-3) \oplus S(-4) \xrightarrow {\left[
\begin{array}{cc}
-y & 0 \\
x & -y^2 \\
0 & z
\end{array}
\right]
}
S(-2)^{\oplus 2} \oplus S(-3) \rightarrow
 S \rightarrow R \rightarrow 0.\]
Thus $r(R)=2$ and $R$ is not level,
and $R$ is Cohen-Macaulay because $\dm R = \depth R = 1 > 0$.
Then, $R$ is nearly Gorenstein by Theorem $2.9$.
\end{ex}

%

Next, we consider the case of the Cohen-Macaulay homogeneous domain.
Even in that case, we can find the following example.

\begin{ex}
  Let $S=\QQ[x,y,z]$ be a graded polynomial ring with
  $\deg x=\deg y=\deg z=1.$
  Consider a homogeneous prime ideal $P=(xz^2-y^3,x^3+xy^2-y^2z,x^2y+y^3-z^3)$ and define $A=S/P$,
  then the graded minimal free resolution of $A$ is as follows.
  
  \[0\rightarrow S(-4) \oplus S(-5) \xrightarrow {\left[
  \begin{array}{cc}
  -y & z^2 \\
  x & -y^2 \\
  -z & x^2+y^2
  \end{array}
  \right]
  }
  S(-3)^{\oplus 3} \rightarrow
   S \rightarrow A \rightarrow 0.\]
  
  Thus $r(A)=2$ and $A$ is not level
  but nearly Gorenstein by Theorem $2.9$.
  \end{ex}




Since $P$ is not a toric ideal, this counterexample is not an affine semigroup ring.
Affine semigroup rings which are Gorenstein on the punctured spectrum (see Remark 2.4)
are not necessarily level.

\begin{ex}
Let $S=\QQ[x_1,x_2,x_3,x_4,x_5,x_6]$ be a standard graded polynomial ring.
Consider a homogeneous toric ideal $P=(x_1{x_5}^2-x_2{x_4}^2,x_1{x_6}^2-x_3{x_4}^2,x_2{x_6}^2-x_3{x_5}^2)$
and define $R=S/P$,
then the graded minimal free resolution of $R$ is as follows.

\[0\rightarrow S(-4) \oplus S(-5) \xrightarrow {\left[
\begin{array}{cc}
x_1 & -{x_4}^2 \\
-x_2 & {x_5}^2 \\
x_3 & -{x_6}^2
\end{array}
\right]
}
S(-3)^{\oplus 3} \rightarrow
 S \rightarrow R \rightarrow 0.\]

Thus $R$ is not level but $\tr(\omega_R)\supseteq \mathbf{m}^4$ by Theorem $2.9$.
Then $R$ is Gorenstein on the punctured spectrum.
\end{ex}
\section{nearly Gorenstein homogeneous affine semigroup rings}
We rephrase the condition of nearly Gorensteinness in terms of semigroups.
First, we prepare the next lemma.

\begin{lem}
Let $R$ be an $\mathbb{N}^n$-graded domain,
and let $p$ be a homogeneous element of $R$, $0 \neq q \in R$.
We put a non-zero ideal $I_R = (f_1, \cdots , f_r)$,
and let \[q=\sum_{\substack{\bfa \in \mathbb{N}^n \\ q_\bfa \neq 0}}q_\bfa\] be
an $\mathbb{N}^n$-graded decomposition of $q$.
Then $\displaystyle \frac{q}{p} \in I_R^{-1}$ if and only if $\displaystyle \frac{q_\bfa}{p} \in I_R^{-1}$
for any $\bfa \in \mathbb{N}^n$.
\begin{proof}
The ``if part" is obvious.
We show ``only if" part.
Assume that there exists some
$\bfa \in \mathbb{N}^n$ such that $\displaystyle 0 \neq \frac{q_\bfa}{p} \in I_R^{-1}$,
then we get

\[\frac{q}{p}=\sum_{\substack{\bfa \in \mathbb{N}^n \\ 0 \neq \frac{q_\bfa}{p} \notin I_R^{-1}}}{\frac{q_\bfa}{p}}+\sum_{\substack{\bfb \in \mathbb{N}^n \\ 0 \neq \frac{q_\bfb}{p} \in I_R^{-1}}}{\frac{q_\bfb}{p}} \in I_R^{-1}.\]

Therefore, $\displaystyle \sum_{\substack{a \in \mathbb{N}^n \\ 0 \neq \frac{q_\bfa}{p} \notin I_R^{-1}} }{\frac{q_\bfa}{p}} \in I_R^{-1}.$
Thus for any $1 \leq i \leq r$, there exists an $x_i \in R$ such that
\begin{equation}
\sum_{\substack{\bfa \in \mathbb{N}^n \\0 \neq \frac{q_\bfa}{p} \notin I_R^{-1}}}{q_\bfa f_i} = px_i.
\end{equation}
Fix $\bfa \in \mathbb{N}^{n}$ such that $\displaystyle 0 \neq \frac{q_\bfa}{p} \notin I_R^{-1}$,
and compare the degree $(\bfa+\deg f_i)$ of both sides of equality $(3)$,
then we get $\displaystyle q_\bfa f_i = p(x_i)_{\bfa+\deg{f_i}-\deg p}$. Thus $\displaystyle \frac{q_\bfa}{p}f_i \in R$ for any $1 \leq i \leq r$.
Therefore, we get $\displaystyle \frac{q_a}{p} \in I_R^{-1}$, a contradiction.
\end{proof}
\end{lem}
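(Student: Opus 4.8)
The plan is to recognize this statement as the standard fact that a graded submodule of a graded module is detected degree by degree, applied to the fractional ideal $I_R^{-1}$ sitting inside the fraction field $Q(R)$. Since $R$ is an $\mathbb{N}^n$-graded domain, $Q(R)$ carries a natural $\mathbb{Z}^n$-grading in which a homogeneous element is a quotient $g/h$ of homogeneous $g,h \in R$ (with $h \neq 0$) of degree $\deg g - \deg h$, and $R$ sits inside $Q(R)$ as a graded subring with $R_\bfa = R \cap Q(R)_\bfa$. Because $I_R$ is the canonical ideal it is a homogeneous ideal, so I may and do take the generators $f_1, \dots, f_r$ to be homogeneous; this is exactly what lets me separate degrees. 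Under this grading, $q/p = \sum_\bfa q_\bfa/p$ is precisely the decomposition of $q/p$ into $\mathbb{Z}^n$-homogeneous components, the component of degree $\bfa - \deg p$ being $q_\bfa/p$.

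The ``if'' direction is immediate: $I_R^{-1}$ is an $R$-submodule of $Q(R)$, hence closed under addition, so if every $q_\bfa/p$ lies in $I_R^{-1}$ then so does their finite sum $q/p$. For the ``only if'' direction I would argue by extracting homogeneous components. By definition $q/p \in I_R^{-1}$ means $(q/p)f_i \in R$ for each $i$, i.e.\ $q f_i = p x_i$ for some $x_i \in R$. Writing $q f_i = \sum_\bfa q_\bfa f_i$ and using that $f_i$ and $p$ are homogeneous, the element $q_\bfa f_i$ is exactly the graded component of $q f_i$ in degree $\bfa + \deg f_i$; comparing this with the graded decomposition of $p x_i$ forces $q_\bfa f_i = p\,(x_i)_{\bfa + \deg f_i - \deg p} \in pR$. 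Hence $(q_\bfa/p) f_i \in R$ for every $i$ and every $\bfa$, which is precisely the assertion $q_\bfa/p \in I_R^{-1}$.

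The only genuinely delicate point is the grading bookkeeping in the first step: one must be sure that membership in $R$ can be tested componentwise in $Q(R)$, that is, that $y \in Q(R)$ lies in $R$ if and only if each of its $\mathbb{Z}^n$-homogeneous components does. This rests on $R = \bigoplus_\bfa R_\bfa$ being a graded subring of the $\mathbb{Z}^n$-graded field $Q(R)$ together with the homogeneity of the generators $f_i$. Once that is in place the degree comparison is routine, and in particular the contradiction-style bookkeeping used in the author's proof can be replaced by the direct extraction of the degree-$(\bfa + \deg f_i)$ part carried out above.
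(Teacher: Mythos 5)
Your proof is correct, and its computational heart---comparing the $\mathbb{N}^n$-graded components of degree $\bfa+\deg f_i$ on both sides of $qf_i=px_i$, using that $p$ and the generators $f_i$ are homogeneous---is the same degree comparison on which the paper's proof turns. The organization, however, is genuinely different and simpler: the paper argues by contradiction, first splitting $q/p$ into the sub-sum of components lying in $I_R^{-1}$ and the sub-sum of those not, subtracting the former to conclude that the ``bad'' sub-sum lies in $I_R^{-1}$, and only then applying the degree comparison to that sub-sum to contradict badness; you apply the comparison directly to $qf_i=px_i$ and read off $q_\bfa f_i=p(x_i)_{\bfa+\deg f_i-\deg p}\in pR$ for every $\bfa$ simultaneously, which shows that the contradiction scaffolding and the good/bad splitting are dispensable. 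You also make explicit a hypothesis the paper uses only silently (when it writes $\deg f_i$): the generators must be taken homogeneous, which is legitimate here because $I_R$ is the canonical ideal of the $\mathbb{N}^n$-graded domain $R$ and hence a graded ideal. One framing claim in your write-up should be corrected, though it does not damage the argument: $Q(R)$ does not in general carry a $\mathbb{Z}^n$-grading; only the localization of $R$ at the multiplicative set of nonzero homogeneous elements is $\mathbb{Z}^n$-graded, and $Q(R)$ can be strictly larger (already for $R=\kk[x]$ the element $1/(1+x)$ admits no representation with homogeneous denominator). The slip is harmless because every fraction you manipulate has the fixed homogeneous denominator $p$ and all of your degree comparisons take place inside $R$ itself, but the general principle you invoke---that membership in a graded submodule is detected componentwise---should be stated for the homogeneous localization rather than for the full quotient field.
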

Let $S$ be a Cohen-Macaulay homogeneous affine semigroup,
and let $G_S=\{ \bfa_1,\cdots,\bfa_s \} \subseteq \mathbb{N}^n$ be the minimal generators of $S$.
Fix the affine semigroup ring $R=\kk[S]$.
For any $\bfa \in \mathbb{N}^n$, we set
\begin{equation} \notag
R_\bfa
=
\begin{cases}
\kk \bfx^\bfa & (\bfa \in S)\\
0 & (\bfa \notin S).
\end{cases}
\end{equation}
Then $R = \bigoplus_{\bfa \in \mathbb{N}^n }R_\bfa$ is a direct sum decomposition as an abelian group,
and $R_\bfa R_\bfb \subseteq R_{\bfa+\bfb}$ for any $\bfa,\bfb \in \mathbb{N}^n$.
Thus we can regard $R$ as an $\mathbb{N}^n$-graded ring.
Since $S$ is homogeneous, we can regard
$R=\kk[S]=\kk[\bfx^{\bfa_1},\cdots,\bfx^{\bfa_s}]$ as standard graded by assigning
$\deg \bfx^{\bfa_i}=1$
for all $1 \leq i \leq s$.
In this case, the canonical module $\omega_{R}$ is isomorphic to an ideal $I_{R}$ of $R$
as an $\mathbb{N}^n$-graded module up to degree shift.
Then we can assume the minimal generators of $I_R$
is $\{ \bfx^{\bfv_1},\cdots,\bfx^{\bfv_r} \}$,
and $V=\{\bfv_1,\cdots,\bfv_r\} \subseteq S$ is a minimal generator of canonical ideal of $S$.
We put
$V_{\min}
=
\{\bfv \in V
; \deg \bfx^\bfv \leqq
\deg \bfx^{\bfv_i} \;\text{for all}
\; 1 \leqq i \leqq r\}$
and
$S-V
:= \{ \bfa \in \mathbb{Z}S \;;\; \bfa + \bfv \in S \; \text{for all}\; \bfv \in V\}$.
Thus the following holds.

\begin{prop}
  Let $S$ be a Cohen-Macaulay
  homogeneous affine semigroup.
  The following are equivalent:
  
  (a) $R=\kk[S]$ is nearly Gorenstein;
  
  (b) For any $\bfa_i \in G_S$,
  there exist $\bfv \in V$
  and $\bfu \in S-V$ such that
  $\bfa_i = \bfu + \bfv$;
  
  (c) For any $\bfa_i \in G_S$,
  there exist $\bfv \in V_{\min}$
  and $\bfu \in S-V$
  such that
  $\bfa_i = \bfu + \bfv$.
\begin{proof}
Since $S-V=\{ \bfa \in \mathbb{Z}S \;;\; \bfx^{\bfa} \in {I_R}^{-1} \}$,
$(b) \Rightarrow (a)$ is obvious.
We show $(a) \Rightarrow (b)$.
Let $R$ be nearly Gorenstein.
For all $1 \leq i \leq m$,
we know from equality (1)
that ${\bfx}^{\bfa_i} \in I_RI_R^{-1}$.
Thus, there exists $g_{ik} \in I_R^{-1}$ for all $1 \leq k \leq r$
such that
${\bfx}^{\bfa_i} = \sum_{k=1}^r{ g_{ik}{\bfx}^{\bfv_k} }.$
We can write $g_{ik} = \frac{ p_{ik} }{ \bfx^{\bfv_1} }$
where $p_{ik} \in R$.
Moreover, we consider the $\mathbb{N}^n$-graded decomposition of $p_{ik}$.
Since $g_{ik} \in I_R^{-1}$,
by using Lemma 4,1, there exist $l_{1i},\cdots,l_{ri} \in \mathbb{N}$
and a set of monomials
$Z_i=\{u_{1,1},\cdots,u_{1,l_{1i}},u_{2,1},\cdots,u_{2,l_{2i}},\cdots,
u_{r,1},\cdots,u_{r,l_{ri}}\} \subset I_R^{-1}$
that also allow negative powers and satisfy the following equality:
 \[\bfx^{\bfa_{i}}=\sum_{k=1}^r\sum_{j=1}^{l_{ki}}u_{k,j}\bfx^{\bfv_k}.\]
By comparing the degree $\bfa_i$ of both sides of
the equality,
there exist $1 \leq n_i \leq r$,
and $u_i \in Z$ such that $\bfx^{\bfa_{i}}=u_i \bfx^{\bfv_{n_i}}$.
Since $u_i \in {I_R}^{-1}$, there exists $\bfu_i \in S-V$ such that
$u_i=\bfx^{\bfu_i}$.
Then $\bfx^{\bfa_{i}}=\bfx^{\bfu_i+\bfv_{n_i}}$
and we get $\bfa_{i}=\bfu_i+\bfv_{n_i}$, as desired.

$(c) \Rightarrow (b)$ is obvious.
We show $(b) \Rightarrow (c)$.
If $R$ is level, then $V=V_{\min}$ and this
is true.
Assume $R$ is non-level.
By the assumption,
for any $1 \leqq i \leqq s$,
there exist $\bfv \in V$ and $\bfu \in S-V$
such that $\bfa_i=\bfu + \bfv$.
It is enough to show $\bfv \in V_{\min}$.
If $\bfv \notin V_{\min}$,
then there exists $\bfv' \in V_\min$
such that $\deg \bfx^{\bfv'-\bfv} <0$.
Since $\bfu \in S-V$,
we get
$\bfa_i + \bfv'-\bfv=\bfu + \bfv' \in S \setminus \{0\}$.
Thus, we have $\deg \bfx^{\bfv'-\bfv}=\deg \bfx^{\bfa_i+\bfv'-\bfv}-1 \geqq 0$,
which yields a contradiction.
\end{proof}
\end{prop}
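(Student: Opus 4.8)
The plan is to set up a dictionary between the trace ideal and the semigroup, and then prove the two separate equivalences $(a)\Leftrightarrow(b)$ and $(b)\Leftrightarrow(c)$. The crucial translation is the identity $S-V=\{\bfa\in\ZZ S \;;\; \bfx^{\bfa}\in I_R^{-1}\}$, which I would record first: since $I_R=(\bfx^{\bfv_1},\dots,\bfx^{\bfv_r})$, a Laurent monomial $\bfx^{\bfa}$ lies in $I_R^{-1}$ precisely when $\bfx^{\bfa}\bfx^{\bfv_k}\in R$ for every $k$, i.e.\ when $\bfa+\bfv_k\in S$ for every $\bfv_k\in V$, which is exactly the defining condition of $S-V$. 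With this in hand, and using that $\omega_R\cong I_R$ up to degree shift forces $\tr(\omega_R)=\tr(I_R)=I_RI_R^{-1}$ by $(1)$, the easy directions are immediate.

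For $(b)\Rightarrow(a)$ the dictionary does everything: a decomposition $\bfa_i=\bfu+\bfv$ with $\bfu\in S-V$ and $\bfv\in V$ gives $\bfx^{\bfa_i}=\bfx^{\bfu}\bfx^{\bfv}\in I_R^{-1}I_R=\tr(\omega_R)$, and since the $\bfx^{\bfa_i}$ generate $\mathbf{m}$ this yields $\mathbf{m}\subseteq\tr(\omega_R)$, i.e.\ near-Gorensteinness. The implication $(c)\Rightarrow(b)$ is trivial because $V_{\min}\subseteq V$.

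The substantive step, and the one I expect to be the main obstacle, is $(a)\Rightarrow(b)$: extracting from an arbitrary trace-ideal expression a single monomial factorization. Near-Gorensteinness gives $\bfx^{\bfa_i}\in\mathbf{m}\subseteq I_RI_R^{-1}$, so $\bfx^{\bfa_i}=\sum_{k=1}^r g_{ik}\bfx^{\bfv_k}$ with $g_{ik}\in I_R^{-1}$, but the $g_{ik}$ are a priori inhomogeneous fractions. This is exactly what Lemma 4.1 is designed to fix: after clearing the common denominator $\bfx^{\bfv_1}$ and applying the lemma to each $g_{ik}$, every $\ZZ^n$-homogeneous component of $g_{ik}$ is again a Laurent monomial in $I_R^{-1}$. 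Hence $\bfx^{\bfa_i}$ is rewritten as a $\kk$-combination of monomials $u_{k,j}\bfx^{\bfv_k}$ with each $u_{k,j}\in I_R^{-1}$. Since distinct monomials are $\kk$-linearly independent in $Q(R)$ and the left side is the single monomial $\bfx^{\bfa_i}$, comparing the degree-$\bfa_i$ component forces one term $u_i\bfx^{\bfv_{n_i}}$ to equal $\bfx^{\bfa_i}$. Running the dictionary backwards, $u_i=\bfx^{\bfu_i}$ with $\bfu_i\in S-V$, whence $\bfa_i=\bfu_i+\bfv_{n_i}$, which is $(b)$.

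Finally, for $(b)\Rightarrow(c)$ I would argue by contradiction through the standard grading. If $R$ is level then $V=V_{\min}$ and there is nothing to prove, so assume $R$ is non-level and take a decomposition $\bfa_i=\bfu+\bfv$ from $(b)$ with $\bfv\notin V_{\min}$. Choosing $\bfv'\in V_{\min}$ gives $\deg\bfx^{\bfv'-\bfv}<0$. Because $\bfu\in S-V$ and $\bfv'\in V$, we have $\bfa_i+\bfv'-\bfv=\bfu+\bfv'\in S\setminus\{0\}$, and since every nonzero element of the homogeneous semigroup $S$ has degree at least $1$, I obtain $\deg\bfx^{\bfv'-\bfv}=\deg\bfx^{\bfa_i+\bfv'-\bfv}-1\geqq 0$, contradicting $\deg\bfx^{\bfv'-\bfv}<0$. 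Hence $\bfv\in V_{\min}$, giving $(c)$. The only delicate point in the whole argument is the homogenization in $(a)\Rightarrow(b)$, where Lemma 4.1 is indispensable for passing from a general trace-ideal expression to a clean monomial equation.
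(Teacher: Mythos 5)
Your proposal is correct and follows essentially the same route as the paper's own proof: the same monomial dictionary $S-V=\{\bfa\in\ZZ S\;;\;\bfx^{\bfa}\in I_R^{-1}\}$, the same use of $\tr(I_R)=I_RI_R^{-1}$ for the easy directions, the same application of Lemma 4.1 to reduce a trace-ideal expression to a sum of monomials in $I_R^{-1}$ followed by comparison of the degree-$\bfa_i$ component, and the same degree-count contradiction for $(b)\Rightarrow(c)$. You supply slightly more justification than the paper (e.g.\ unwinding why the dictionary holds and why distinct monomials cannot interfere), but the argument is the same.
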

We put
\[\tr(\omega_S)=\{\bfa \in G_S\;;\; \text{there exist }\bfv \in V_\min \;\text{and }\; \bfu \in S-V \text{such that } \bfa=\bfv+\bfu\}.\]
By Proposition 4.2, $R=\kk[S]$ is nearly Gorenstein
if and only if $\tr(\omega_S)=G_S$.
\begin{thm}
  Let $E$ be the set of extremal rays of $S$.
  If $V_{\min}=\{\bfv\}$ and $|V| \geqq 2$,
  then $E \nsubseteq \tr(\omega_S)$.
  \end{thm}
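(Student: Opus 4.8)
The plan is to translate the conclusion into a single membership question in $S$ and then to produce one extremal generator witnessing $E \nsubseteq \tr(\omega_S)$. First I would unwind the definition of $\tr(\omega_S)$ using the hypothesis $V_{\min} = \{\bfv\}$: an extremal generator $\bfa$ lies in $\tr(\omega_S)$ precisely when $\bfa - \bfv \in S-V$, i.e. when $\bfa - \bfv + \mathbf{w} \in S$ for every $\mathbf{w} \in V$. Consequently, to prove $E \nsubseteq \tr(\omega_S)$ it is enough to exhibit one extremal ray with minimal generator $\bfa \in G_S$ and one $\bfv' \in V$ such that $\bfa - \bfv + \bfv' \notin S$.

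Since $|V| \geq 2$ and $V_{\min} = \{\bfv\}$, I can fix $\bfv' \in V$ with $\deg \bfx^{\bfv'} > \deg \bfx^{\bfv}$ and set $\mathbf{h} := \bfv' - \bfv \in \ZZ S$. Its degree is positive, so $\mathbf{h} \notin S$: otherwise $\bfx^{\bfv'} = \bfx^{\mathbf{h}}\bfx^{\bfv}$ would lie in $\mathbf{m} I_R$, contradicting that $\bfx^{\bfv'}$ is a minimal generator of the canonical ideal $I_R$. Thus $\mathbf{h} \in \ZZ S \setminus S$, and I would split into two cases according to whether $\mathbf{h}$ lies in the cone $C := \RR_{\geq 0} S$.

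If $\mathbf{h} \notin C$, I apply the preliminary lemma on extremal rays (Lemma 2.6) to $\mathbf{h}$, obtaining an extremal ray $l \in E$ with $(\mathbf{h} + l) \cap C = \emptyset$; taking the generator $\bfa$ on $l$ gives $\bfa + \mathbf{h} \in \mathbf{h} + l$, hence $\bfa + \mathbf{h} \notin C \supseteq S$, i.e. $\bfa - \bfv + \bfv' \notin S$. If instead $\mathbf{h} \in C$, then $\mathbf{h} \in (\ZZ S \cap C) \setminus S = \overline{S} \setminus S$ is a hole; since $R$ is Cohen-Macaulay it satisfies $(S_2)$, so by Theorem 2.8 every family of holes in the decomposition of Theorem 2.7 has dimension $d-1$, and therefore $\mathbf{h}$ lies in some $(s + \ZZ F) \cap C$ with $F$ a facet. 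As $F$ is a pointed cone of dimension $d-1 \geq 1$, it contains an extremal ray $l$ of $C$, whose generator $\bfa$ satisfies $\bfa \in F \subseteq \ZZ F$; hence $\bfa + \mathbf{h} \in (s + \ZZ F) \cap C \subseteq \overline{S} \setminus S$, giving again $\bfa + \mathbf{h} \notin S$. In either case $\bfa \in E$ but $\bfa \notin \tr(\omega_S)$, as required.

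The hard part will be the case $\mathbf{h} \in C$, where $\mathbf{h}$ is a genuine hole and the extremal-ray lemma no longer applies directly because $\mathbf{h} + l$ may meet $C$. The key idea that rescues this case is that translating a hole by an extremal generator lying \emph{on} its facet keeps the point in the same $(d-1)$-dimensional family of holes, hence outside $S$; this is exactly where Cohen-Macaulayness enters, through the $(S_2)$-classification of hole families as facet families. A secondary point to check carefully is the tacit identification of an element of $E$ with the unique minimal generator of $S$ on the corresponding extremal ray, together with the fact that for $d \geq 2$ every facet of $C$ does contain an extremal ray.
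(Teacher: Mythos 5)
Your proposal is correct and follows essentially the same route as the paper's proof: both reduce to the element $\bfv'-\bfv \in \ZZ S\setminus S$, split into the cases $\bfv'-\bfv \notin \RR_{\geqq 0}S$ (handled by Lemma 2.6) and $\bfv'-\bfv \in \overline{S}\setminus S$ (handled by Cohen--Macaulayness $\Rightarrow (S_2)$ together with Theorems 2.7 and 2.8, picking an extremal ray inside the facet of the hole family). The only difference is presentational — you exhibit a witness directly where the paper assumes $E \subseteq \tr(\omega_S)$ and derives a contradiction.
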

\begin{proof}
In this case,
we can write
$\tr(\omega_S)=
\{\bfa \in G_S\;;\;
\text{there exists }\bfu \in S-V \text{such that }
\bfa=\bfv+\bfu\}$.
We assume $E \subseteq \tr(\omega_S)$.
Take $\bfv' \in V$ such that $\deg \bfx^\bfv < \deg \bfx^{\bfv'}$.
Since $\bfv, \bfv' \in V$ and
$\bfv \neq \bfv'$,
we get $\bfv'-\bfv \in \ZZ{S}\setminus S$.
We assume
$\bfv'-\bfv \notin \RR_{\geqq 0}S$.
Then by Lemma 2.6,
there exists $\bfa \in E$
such that
$\bfv' - \bfv + \bfa \notin S.$
On the other hand,
since $\bfa \in \tr(\omega_S)$,
there exists $\bfu \in S-V$
such that $\bfa=\bfv+\bfu$.
Thus we get $\bfv' - \bfv + \bfa \in S,$
this yields a contradiction.

Then, $\bfv'-\bfv \in \RR_{\geqq 0}S$ and
we get $\bfv'-\bfv \in \overline{S} \setminus S$.
Since $S$ is Cohen-Macaulay, 
$S$ satisfies $(S_2)$-condition.
By Theorem 2.8,
every family of holes has dimension $d-1$.
    Since $\bfv'-\bfv \in \overline{S}\setminus S$,
there exist
$s_i \in \overline{S}$ and
facet $F$ of $S$ such that
$\bfv'-\bfv \in s_i +\ZZ F$
and $(s_i+\ZZ F) \cap S= \emptyset$
by using Theorem 2.7.
Since $\bfv'-\bfv \in s_i +\ZZ F$,
we can take $\bfx \in \ZZ F$ and write
$\bfv'-\bfv=s_i+\bfx$.
Thus, we get
$(\bfv'-\bfv + \ZZ F)\cap S=(s_i+\ZZ F) \cap S=\emptyset$.
In particular,
pick an extremal ray $l$ of facet $F$,
we get
    ($\bfv'-\bfv + \ZZ l) \cap S = \emptyset$.
On the other hand,
we have $\bfv'-\bfv + l \in S$ because $l \in E$.
This yields a contradiction.
\end{proof}
\begin{thm}\label{A1}
  Let $(h_0, h_1, . . . , h_s)$ be the $h$-vector of $R$.
  If $R$ is nearly Gorenstein but not Gorenstein,
  then $h_s \geqq 2$.
  \end{thm}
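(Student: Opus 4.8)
The plan is to read the last $h$-vector entry $h_s$ off the canonical module and then feed the resulting combinatorial data into the immediately preceding theorem (the one asserting that $V_{\min}=\{\bfv\}$ together with $|V|\geq 2$ forces $E\nsubseteq\tr(\omega_S)$). The crucial first step is to identify $h_s$ with $|V_{\min}|$. Writing $H(R,t)=h(t)/(1-t)^d$ with $h(t)=\sum_{i=0}^s h_i t^i$ and $d=\dim R$, graded local duality for the Cohen-Macaulay ring $R$ gives
\[H(\omega_R,t)=(-1)^d H(R,t^{-1})=\frac{\sum_{i=0}^s h_i\,t^{\,d-i}}{(1-t)^d}.\]
Since $h_s\neq 0$, the numerator has lowest-degree term $h_s\,t^{\,d-s}$, so $(\omega_R)_j=0$ for $j<d-s$ while $\dim_{\kk}(\omega_R)_{d-s}=h_s$. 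Because nothing lies in lower degrees, every element of $(\omega_R)_{d-s}$ is a minimal generator, so the number of least-degree minimal generators of $\omega_R$ is exactly $h_s$. As $\omega_R\cong I_R$ up to a degree shift, these least-degree generators correspond precisely to $V_{\min}$, yielding the key identity $|V_{\min}|=h_s$.

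With this identity in place I would argue by contradiction. Suppose $h_s=1$, i.e. $|V_{\min}|=1$, say $V_{\min}=\{\bfv\}$. Since $R$ is not Gorenstein, $\omega_R$ is not cyclic, hence $|V|=r(R)\geq 2$. These are exactly the hypotheses of the preceding theorem, which therefore gives $E\nsubseteq\tr(\omega_S)$, where $E$ is the set of extremal rays of $S$. On the other hand, each primitive generator of an extremal ray is a minimal generator of $S$: if it decomposed as a sum of two nonzero elements of $S$, the defining property of a face would force both summands onto the ray, contradicting primitivity; thus $E\subseteq G_S$. By Proposition~4.2 the nearly Gorenstein hypothesis gives $\tr(\omega_S)=G_S\supseteq E$, which contradicts $E\nsubseteq\tr(\omega_S)$. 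Hence $h_s\geq 2$.

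I expect the main obstacle to be the first step, namely pinning down the combinatorial meaning of the leading $h$-vector entry as $h_s=|V_{\min}|$; this is where the interaction between the local-duality computation and the degree-shift identification $\omega_R\cong I_R$ must be handled carefully. Once $h_s$ is recognized as the number of minimal-degree generators of the canonical ideal, the statement collapses onto the already-established preceding theorem, and the two remaining points (that non-Gorensteinness forces $|V|\geq 2$, and that extremal-ray generators belong to $G_S$) are routine.
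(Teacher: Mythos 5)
Your proof is correct and takes essentially the same route as the paper's: the paper's own (very terse) proof also rests on the identity $h_s=|V_{\min}|$ and then applies Theorem 4.3 together with Proposition 4.2 (nearly Gorenstein $\Leftrightarrow \tr(\omega_S)=G_S$) to reach a contradiction. Your write-up simply makes explicit the steps the paper leaves implicit — the local-duality computation showing $|V_{\min}|=h_s$, the bound $|V|=r(R)\geq 2$ coming from non-Gorensteinness, and the inclusion of the extremal-ray generators in $G_S$.
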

  \begin{proof}
    If $h_s=|V_{\min}|=1$, then $E \nsubseteq \tr(\omega_S)$ by Theorem 4.3.
    Thus $R$ is not nearly Gorenstein. This gives a contradiction.
    \end{proof}
\begin{thm}\label{A}
For any homogeneous affine semigroup ring, if it is nearly Gorenstein with Cohen-Macaulay type 2, then it is level.
\end{thm}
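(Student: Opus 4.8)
The plan is to derive this as an immediate consequence of Theorem~\ref{A1}, by trapping the top $h$-vector entry $h_s$ between a lower and an upper bound that coincide precisely when the type equals $2$. Recall that by Definition~2.1 levelness amounts to the equality $V = V_{\min}$, i.e.\ all minimal generators of the canonical ideal $I_R$ share the same degree, and that Cohen--Macaulay type $2$ means $r(R) = |V| = 2$. In particular $|V| = 2 > 1$, so $R$ is not Gorenstein, and once we assume $R$ is nearly Gorenstein the hypotheses of Theorem~\ref{A1} are satisfied.

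First I would record the identity $h_s = |V_{\min}|$ that already underlies the proof of Theorem~\ref{A1}. This comes from comparing Hilbert series: writing $H(R,t) = (h_0 + h_1 t + \cdots + h_s t^s)/(1-t)^d$ and using $H(\omega_R, t) = (-1)^d H(R, t^{-1})$, one computes $H(\omega_R, t) = t^{d-s}(h_s + h_{s-1} t + \cdots + h_0 t^s)/(1-t)^d$. Hence the lowest degree in which $\omega_R \cong I_R$ is nonzero is $d-s$, and $\dim_\kk (I_R)_{d-s} = h_s$. Since distinct monomials are linearly independent in $R = \kk[S]$, every element of this lowest-degree component is forced to be a minimal generator, so the number of minimal generators of least degree is exactly $|V_{\min}| = h_s$.

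The conclusion is then a one-line squeeze. On the one hand, Theorem~\ref{A1} applied to the nearly Gorenstein, non-Gorenstein ring $R$ gives $h_s \geqq 2$. On the other hand, the inclusion $V_{\min} \subseteq V$ yields $h_s = |V_{\min}| \leqq |V| = r(R) = 2$. Combining the two forces $h_s = 2 = |V|$, so $|V_{\min}| = |V|$ and therefore $V_{\min} = V$; by the definition of levelness, $R$ is level.

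I expect no genuine obstacle internal to this statement, since all the substantive work is already carried out in Theorem~\ref{A1} and in the structural Theorem~4.3 preceding it (via Lemma~2.6 and the $(S_2)$ analysis of the holes of $S$). The only point requiring care is the clean identification $h_s = |V_{\min}|$; granting that, the type-$2$ hypothesis is exactly what supplies the matching upper bound $h_s \leqq 2$ needed to close against the lower bound $h_s \geqq 2$ of Theorem~\ref{A1}.
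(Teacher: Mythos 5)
Your proof is correct and takes essentially the same route as the paper: the paper likewise deduces the theorem from Theorem~\ref{A1} via the identity $h_s=|V_{\min}|$, arguing by contradiction (non-level plus type $2$ would force $h_s=|V_{\min}|=1$) where you run the equivalent direct squeeze $2\leqq h_s=|V_{\min}|\leqq |V|=2$. Your explicit Hilbert-series justification of $h_s=|V_{\min}|$ merely spells out a step the paper leaves implicit.
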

\begin{proof}
If $R$ is not level, then $h_s=1$.
This contradicts Theorem 4.4.
\end{proof}
\begin{ex}
  Let $R=\QQ[s,st,st^3,st^4,st^{9},st^{14}]$
  be a Cohen-Macaulay homogeneous affine semigroup ring.
  Then $h$-vector of $R$ is $(1,4,8,1)$.
  Thus $R$ is not Gorenstein.
  Since $h_4=1$, $R$ is not nearly Gorenstein by Theorem 4.4.
\end{ex}
\begin{rem}
The same statement as Theorem 4.4 and Theorem 4.5 does not hold
for general homogeneous domains.
For example,
consider nearly Gorenstein and non-level
homogeneous domain $A$ in Example 3.2,
then $r(A)=2$ and
the $h$-vector of $A$ is $(1,2,3,1)$.
\end{rem}
Next, we give counterexamples to Question 1.1 if $r(R) \geqq 3.$
Before that, we prepare a little more.
For simplicial affine semigroup ring,
Cohen-Macaulayness is determined independently of field $\kk$ ({see \cite[Theorem (1)]{Aff1}}).
Moreover, for Cohen-Macaulay simplicial affine semigroup ring,
the canonical module is uniquely determined independently of field $\kk$ ({see \cite[Theorem (3)]{Aff1}}).
Then the next statement holds.
\begin{prop}
Let $S$ be a simplicial affine semigroup.
The following conditions are equivalent:

(1) $\QQ[S]$ is nearly Gorenstein;

(2) There exists a field $\kk$ such that $\kk[S]$ is nearly Gorenstein;

(3) For every field $\kk$, $\kk[S]$ is nearly Gorenstein.

The same statement holds if we change“nearly Gorenstein”to“level”.
\end{prop}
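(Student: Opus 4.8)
The plan is to reduce both properties to combinatorial data attached to $S$ that makes no reference to the field, and then to quote the field-independence of Cohen-Macaulayness and of the canonical module supplied by \cite{Aff1}.

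First I would dispose of the easy implications: $(3)\Rightarrow(1)\Rightarrow(2)$ are trivial, since $\QQ$ is one particular field, so the whole content is $(2)\Rightarrow(3)$. Suppose then that $\kk[S]$ is nearly Gorenstein (resp. level) for some field $\kk$. By our standing hypotheses such a ring is Cohen-Macaulay, and by \cite[Theorem (1)]{Aff1} the Cohen-Macaulayness of a simplicial affine semigroup ring is independent of the field; hence $\kk'[S]$ is Cohen-Macaulay for every field $\kk'$. Then \cite[Theorem (3)]{Aff1} guarantees that the canonical module of $\kk'[S]$ is determined by $S$ alone: concretely, the canonical ideal $I_{\kk'[S]}$ is the monomial ideal on a lattice set $V\subseteq S$, and both $V$ and its minimal-degree subset $V_{\min}$ are the same for every $\kk'$.

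For levelness the conclusion is then immediate: by Definition 2.1, $\kk'[S]$ is level precisely when all minimal generators of $\omega_{\kk'[S]}$ share the same degree, that is, when $V=V_{\min}$, and this is a condition on $S$ alone. For nearly Gorensteinness I would argue through the trace. By equality (1) we have $\tr(\omega_{\kk'[S]})=I_{\kk'[S]}\,I_{\kk'[S]}^{-1}$, and the graded inverse is governed by the field-independent lattice set $S-V=\{\bfa\in\ZZ S : \bfx^{\bfa}\in I_{\kk'[S]}^{-1}\}$. Hence $\tr(\omega_{\kk'[S]})$ is the monomial ideal generated by $\{\bfx^{\bfu+\bfv} : \bfu\in S-V,\ \bfv\in V\}$, and by Definition 2.2 nearly Gorensteinness is the containment $\tr(\omega_{\kk'[S]})\supseteq\mathbf{m}$ — equivalently, in the homogeneous case, the condition $\tr(\omega_S)=G_S$ of Proposition 4.2. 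Since this is a statement purely about lattice points of $S$, it holds for one field if and only if it holds for all, giving $(2)\Rightarrow(3)$ for both properties at once.

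The step I expect to be delicate is verifying that $I^{-1}$, and therefore the product $I_{\kk'[S]}\,I_{\kk'[S]}^{-1}$, genuinely admit the monomial/lattice-point descriptions above, so that the test $\tr(\omega)\supseteq\mathbf{m}$ carries no hidden dependence on $\kk'$. This is exactly where one uses that $\kk'[S]$ is a domain and that $I_{\kk'[S]}$ is monomial: the $\ZZ^n$-graded pieces of $I_{\kk'[S]}^{-1}$ are precisely the Laurent monomials $\bfx^{\bfa}$ with $\bfa+\bfv\in S$ for all $\bfv\in V$, a condition on $S$ only. Once this identification is in hand the two equivalences are formal, and the same reasoning settles the analogous statement for levelness.
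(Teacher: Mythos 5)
Your proposal is correct and follows essentially the same route as the paper: the paper states this proposition as an immediate consequence of the field-independence of Cohen--Macaulayness and of the canonical module for simplicial affine semigroup rings (\cite[Theorems (1) and (3)]{Aff1}), combined with the purely combinatorial descriptions of levelness (via $V=V_{\min}$) and of the trace ideal (via $V$ and $S-V$, as in Proposition 4.2). Your write-up merely makes explicit the monomial/lattice-point description of $I_R^{-1}$ and $\tr(\omega_R)$ that the paper leaves implicit.
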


For homogeneous affine semigroup rings with type 3
or more, nearly Gorensteinness
does not imply levelness in general.

\begin{thm}\label{A2}
For every $3\leq d \leq 5$,
there exists type $d$ non-level nearly Gorenstein homogeneous affine semigroup ring.
\begin{proof}
Let $\kk$ be a field.
\begin{itemize}
  \item If $d=3$, it is enough to show that
  $R_3=\kk[s,st^2,st^4,st^5,st^{7},st^{9},st^{12},st^{17}]$
  is type 3 non-level nearly Gorenstein.
  By Proposition 4.7, we can assume $\kk=\QQ$.
Then the graded minimal free resolution of $R_3$ is as follows.
\begin{align}\nonumber
0 &\rightarrow S(-8) \oplus S(-9)^{\oplus 2}
\xrightarrow{A_{R_3}} S(-7)^{\oplus 12} \oplus S(-8)^{\oplus 5}
\rightarrow S(-5)^{\oplus 4} \oplus S(-6)^{\oplus 45}\\
&\rightarrow S(-4)^{\oplus 25} \oplus S(-5)^{\oplus 50}
\rightarrow S(-3)^{\oplus 30} \oplus S(-4)^{\oplus 28}\nonumber
\rightarrow S(-2)^{\oplus 13} \oplus S(-3)^{\oplus 6}\\
&\rightarrow S \rightarrow R_3 \rightarrow 0, \nonumber
\end{align}
where $S=\QQ[x_1,\cdots,x_8]$ is a polynomial ring.
Thus $R_3$ is Cohen-Macaulay and type 3 non-level.
By using $\mathtt{Macaulay2}$ ({\cite{M2}}),
one can see that
the generator of $\ker(R_3 \otimes_S A_{R_3})$ is as follows.

\[\left[
\begin{array}{cccccc}
{x_6}^2 \\
-x_8 \\
-x_7
\end{array}
\right],
\left[
\begin{array}{cccccc}
x_3x_6 \\
-x_7 \\
-x_5
\end{array}
\right],
\left[
\begin{array}{cccccc}
{x_4}^2 \\
-x_6 \\
-x_3
\end{array}
\right],
\left[
\begin{array}{cccccc}
{x_3}^2 \\
-x_5 \\
-x_2
\end{array}
\right],
\left[
\begin{array}{cccccc}
x_2x_3 \\
-x_4 \\
-x_1
\end{array}
\right]
\in {R_3}^3\]
Then $R_3$ is nearly Gorenstein by Theorem 2.9.
\item If $d=4$ or $5$, it is enough to show that
  $R_4=\kk[s,st^{4},st^{9},st^{12},st^{13},st^{21}]$
  is type 4 non-level nearly Gorenstein
  and
  $R_5=\kk[s,st^6,st^7,st^9,st^{13},st^{15},st^{19}]$
  is type 5 non-level nearly Gorenstein,
  respectively.
  In the same way as above, we can check
  $R_4$ and $R_5$ are non-level nearly Gorenstein,
  $r(R_4)=4$ and $r(R_5)=5$.
  Moreover, the generator of $\ker(R_4 \otimes_S A_{R_4})$ and $\ker(R_5 \otimes_S A_{R_5})$
  are as follows, respectively.
  \[\left[
  \begin{array}{cccccc}
  {x_4}^3 \\
  -x_6 \\
  x_5 \\
  x_3
  \end{array}
  \right],
  \left[
  \begin{array}{cccccc}
  {x_3}^3 \\
  -x_4 \\
  x_2 \\
  x_1
  \end{array}
  \right],
  \left[
  \begin{array}{cccccc}
  {x_1}^2x_2x_5{x_6}^2 \\
  -{x_3}^2{x_5}^2 \\
  x_1{x_4}^3 \\
  {x_2}^2{x_4}^2
  \end{array}
  \right]
  \in {R_4}^4\]
  \[\left[
  \begin{array}{cccccc}
  {x_4}^2 \\
  x_7 \\
  x_6 \\
  x_5 \\
  x_2
  \end{array}
  \right],
  \left[
  \begin{array}{cccccc}
  {x_2}^2 \\
  x_5 \\
  x_4 \\
  x_3 \\
  x_1
  \end{array}
  \right],
  \left[
  \begin{array}{cccccc}
  x_1x_3x_4x_7 \\
  x_2{x_6}^2 \\
  x_1x_5x_7 \\
  x_1{x_6}^2 \\
  {x_3}^2x_4
  \end{array}
  \right]
  \in {R_5}^5\]
  Then $R_4$ and $R_5$ are nearly Gorenstein by Theorem 2.9.
\end{itemize}
\end{proof}
\end{thm}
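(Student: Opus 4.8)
The statement is purely existential, so the plan is to exhibit, for each $d\in\{3,4,5\}$, one concrete homogeneous affine semigroup ring of Cohen-Macaulay type $d$ that is nearly Gorenstein but not level. The natural place to look is among homogenized numerical semigroups $R=\kk[s,st^{a_1},\dots,st^{a_k}]$: assigning degree $1$ to each generator makes these standard graded, pointed, and $2$-dimensional, so any example produced this way automatically realizes the ``$2$-dimensional counterexamples'' promised in the introduction. By Proposition 4.7 nearly Gorensteinness and levelness of a simplicial affine semigroup ring are independent of the base field, so I would fix $\kk=\QQ$ throughout and work over the polynomial ring $S=\QQ[x_1,\dots,x_n]$ surjecting onto $R$.

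For the verification of a single candidate I would proceed as follows. First compute a graded minimal free $S$-resolution
\[
0\rightarrow F_p \xrightarrow{\phi_p} F_{p-1}\rightarrow\cdots\rightarrow F_0\rightarrow R\rightarrow 0,
\]
which is where $\mathtt{Macaulay2}$ does the work. Cohen-Macaulayness is read off from $\pd R=n-2=\mathrm{codim}\,R$; the Cohen-Macaulay type is $r(R)=\mathrm{rank}\,F_p$, which I would arrange to equal $d$; and non-levelness is immediate once $F_p$ carries at least two distinct degree shifts. For nearly Gorensteinness I would invoke Theorem 2.9(a): fixing a basis $e_1,\dots,e_t$ of $F_p$ and computing generators $\sum_j r_{ij}e_j$ of $\ker(\psi_p)$, where $\psi_p=\phi_p\otimes R$, the trace $\tr(\omega_R)$ is generated by the entries $r_{ij}$. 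It then suffices to display these kernel generators explicitly and observe that their entries collectively generate the graded maximal ideal $\mathbf{m}$, which forces $\tr(\omega_R)=\mathbf{m}$ and hence nearly Gorensteinness.

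Concretely, I expect the decisive data to be $d$ syzygy vectors whose combined coordinates run through enough of $x_1,\dots,x_n$ to generate $\mathbf{m}$, while the degree shifts in $F_p$ stay spread out so that non-levelness is preserved. The genuinely hard part is not any single verification but the \emph{search}: there is no evident closed-form recipe that produces, for a prescribed type $d$, a homogenized numerical semigroup that is simultaneously non-level and forces the kernel entries to exhaust $\mathbf{m}$. I would therefore treat the construction as a finite computer search over candidate generating sets $\{a_1,\dots,a_k\}$, filtering for $r(R)=d$, for non-levelness, and for the Theorem 2.9(a) criterion, and then record the successful data for $d=3,4,5$. Once such examples are in hand, the proof reduces to exhibiting the resolutions together with the kernel generators and citing Theorem 2.9, so essentially all of the mathematical content is front-loaded into locating the semigroups.
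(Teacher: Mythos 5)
Your verification template is exactly the paper's: restrict attention to homogenized numerical semigroups so the examples are $2$-dimensional, reduce to $\kk=\QQ$ via Proposition 4.7, compute a graded minimal free $S$-resolution with \texttt{Macaulay2}, read off Cohen--Macaulayness and the type from the last free module $F_p$, detect non-levelness from the presence of two distinct twists in $F_p$, and certify nearly Gorensteinness by checking that the entries of the generators of $\ker(\phi_p\otimes R)$ generate $\mathbf{m}$, invoking Theorem 2.9(a). This is, step for step, how the paper argues.

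However, the statement is existential, and your proposal stops exactly where the proof has to begin: you never exhibit the semigroups. The entire mathematical content of the paper's proof consists of the three explicit witnesses $R_3=\kk[s,st^2,st^4,st^5,st^{7},st^{9},st^{12},st^{17}]$, $R_4=\kk[s,st^{4},st^{9},st^{12},st^{13},st^{21}]$, and $R_5=\kk[s,st^6,st^7,st^9,st^{13},st^{15},st^{19}]$, together with their computed resolutions and the explicit kernel generators whose entries exhaust $\mathbf{m}$. You acknowledge that locating such data is ``the genuinely hard part'' and defer it to an unspecified computer search, but you give no argument that the search succeeds --- and its success is precisely the theorem. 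A verification recipe plus a search plan does not establish existence; until concrete semigroups are produced and the Theorem 2.9 criterion is checked on them, the claim remains open. So your proposal is a correct plan, structurally identical to the paper's proof, but with a genuine gap: the examples themselves, which are the proof, are missing.
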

Lastly, we discuss Hilbert series of nearly Gorenstein homogeneous affine semigroup ring $R$
with $r(R)=\pd(R)=2$.
In this case,
we can determine all minimal graded free resolutions of $R$
and show that
the Hilbert series of $R$ has nice form.
We denote $|\bfa|=\sum_{k=1}^{n}a_i$
where $\bfa=(a_1,a_2, \cdots, a_n) \in \mathbb{N}^n.$
 \begin{thm}\label{A3}
    Let $d \geqq 2$ and let $R$ be a $d$-dimensional homogeneous affine semigroup.
    If $R$ is nearly Gorenstein and $\pd(R) = r(R) = 2$
    , then
    \[H(R,t)=\frac{1+2\sum_{i=1}^s{t^i}}{(1-t)^d}.\]
\begin{proof}
By the assumption,
there exists a codimension 2 homogeneous prime binomial ideal $I$
such that
$I$ is minimally generated by three elements and
$R \cong S/I$, where $S=\kk[x_1,\cdots,x_n]$ is a polynomial ring.
Since $I$ is a codimension $2$ lattice ideal and the number of minimal generators of $I$
is 3,
$R$ is a $(n-2)$-dimensional Cohen-Macaulay ring
and the graded minimal free resolution of $R$ is of the following form by Proposition 2.11.
\[0\rightarrow S^{2} \xrightarrow {A =\left[
\begin{array}{cc}
u_1 & -u_4 \\
-u_2 & u_5 \\
u_3 & -u_6
\end{array}
\right]
}
S^{3} \rightarrow
 S \rightarrow R \rightarrow 0.\]
Here, $u_i$ is a monomial of $S$ for all $1 \leq i \leq 6$.
By using Hilbert-Burch Theorem ({see \cite[Theorem 1.4.17]{BH}}), $I$ is minimally generated by
$f_1=u_1u_5-u_2u_4, f_2=u_3u_4-u_1u_6$ and $f_3=u_2u_6-u_3u_5$.
Since $I$ is a prime binomial ideal, for all $i=1,2,3$, there exist
$\bfa_i, \bfb_i \in \mathbb{N}^n$ such that
$f_i=\bfx^{\bfa_i}-\bfx^{\bfb_i}$, $|\bfa_i|=|\bfb_i|$
and $\gcd(\bfx^{\bfa_i},\bfx^{\bfb_i})=1$.
We assume that $R$ is nearly Gorenstein.
\begin{itemize}
\item If $d=2$, since $R$ is nearly Gorenstein,
$A$ may be assumed to have one of the following forms by Theorem 2.9.

(i) $A =\left[
\begin{array}{cc}
x_1 & -x_4 \\
-x_2 & u_5 \\
x_3 & -u_6
\end{array}
\right]
$
or
(ii) $A =\left[
\begin{array}{cc}
x_1 & -x_3 \\
-u_2 & x_4 \\
x_2 & -u_6
\end{array}
\right]
$
or
(iii) $A =\left[
\begin{array}{cc}
x_1 & -x_3 \\
-x_2 & x_4 \\
u_3 & -u_6
\end{array}
\right]
$.

(For example,
there is also
a possibility that
$A =\left[
\begin{array}{cc}
u_1 & -x_2 \\
-u_2 & x_3 \\
x_1 & -x_4
\end{array}
\right]
$,
but this
can be regarded to be
the same as (i).)

In the cases of (i) and (ii),
we see that all components of the matrix $A$ are variables $x_i$.
Then the graded minimal free resolution of $R$ is as follows.
\[0\rightarrow S(-3)^{\oplus 2} \rightarrow
S(-2)^{\oplus 3} \rightarrow
 S \rightarrow R \rightarrow 0.\]
Thus
$\displaystyle
H(R,t)=H(S,t)-3H(S(-2),t)+2H(S(-3),t)=\frac{1+2t}{(1-t)^2}$.

(iii) Assume the case of $A =\left[
\begin{array}{cc}
x_1 & -x_3 \\
-x_2 & x_4 \\
u_3 & -u_6
\end{array}
\right]
$.
We can write $u_3={x_3}^{n_1}{x_4}^{m_1}, u_6={x_1}^{n_2}{x_2}^{m_2}$
where $N=n_1+m_1=n_2+m_2, (0,0) \neq (n_i,m_i) \in \mathbb{N}^2$ for each $i=1,2$.
Then the graded minimal free resolution of $R$ is as follows.
\[0\rightarrow S(-N-2)^{\oplus 2} \xrightarrow {\left[
\begin{array}{cc}
x_1 & -x_3 \\
-x_2 & x_4 \\
{x_3}^{n_1}{x_4}^{m_1} & -{x_1}^{n_2}{x_2}^{m_2}
\end{array}
\right]
}
S(-N-1)^{\oplus 2} \oplus S(-2) \rightarrow
 S \rightarrow R \rightarrow 0.\]
 Thus $\displaystyle
 H(R,t)=\frac{1-(2t^{N+1}+t^2)+2t^{N+2}}{(1-t)^4}
 =\frac{1+2\sum_{i=1}^N{t^i}}{(1-t)^2}$.

\item If $d=3$ or $4$,
we see that all components of the matrix $A$ are variables $x_i$.
\\Then $\displaystyle
H(R,t)=\frac{1+2t}{(1-t)^2}.$
\item If $d\geq 5$, $R$ cannot be nearly Gorenstein by Theorem 2.9.
\end{itemize}
\end{proof}
\end{thm}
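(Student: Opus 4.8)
The plan is to pin down the graded minimal free resolution of $R$ completely and then read the Hilbert series off the graded Betti numbers by the usual alternating sum. First I would record the structure forced by the hypotheses. Being a homogeneous affine semigroup ring, $R$ is a domain, so writing $R \cong S/I$ with $S=\kk[x_1,\dots,x_n]$ standard graded and $I \subseteq \mathbf{n}^2$ its minimal presentation, the ideal $I$ is a prime binomial (toric) ideal. Auslander--Buchsbaum together with $\pd(R)=2$ shows that $I$ has codimension $2$ and $n=d+2$. Since $r(R)=2$, the ring $R$ is Cohen--Macaulay but not Gorenstein, so by the remark after Proposition 2.11 the ideal $I$ has exactly three minimal generators, and Proposition 2.11 presents the resolution as $0 \to S^2 \xrightarrow{A} S^3 \to S \to R \to 0$ with a $3\times 2$ monomial matrix $A$. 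By Hilbert--Burch the three generators are, up to sign, the $2\times2$ minors of $A$, and primeness lets me normalise each to a binomial $\bfx^{\bfa_i}-\bfx^{\bfb_i}$ with $|\bfa_i|=|\bfb_i|$ and $\gcd(\bfx^{\bfa_i},\bfx^{\bfb_i})=1$.

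Next I would convert the nearly Gorenstein hypothesis into a condition on $A$. Since $r(R)=2$ and $\dim R>0$, Theorem 2.9(b) says $R$ is nearly Gorenstein if and only if $I_1(A)=\mathbf{n}$. Because the entries of $A$ are monomials lying in $\mathbf{n}$, this holds precisely when every variable $x_j$ occurs as one of the (at most six) entries of $A$. As there are only six entries, this forces $n \leq 6$, hence $d \leq 4$; in particular there is no nearly Gorenstein example when $d \geq 5$, which settles that range.

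The case analysis then runs over $d\in\{2,3,4\}$. For $d=4$ ($n=6$) the six entries must be the six distinct variables, and for $d=3$ ($n=5$) at most one entry can be a non-variable; but in any minor containing such an entry the two terms of the resulting homogeneous binomial must have equal degree, and since the three remaining entries are variables this forces that entry to have degree one as well. So in both cases every entry is a variable, every minor is a quadric, and the resolution is $0 \to S(-3)^2 \to S(-2)^3 \to S \to R \to 0$. The alternating sum of shifted copies of $1/(1-t)^{d+2}$ gives $H(R,t)=(1-3t^2+2t^3)/(1-t)^{d+2}$, and the factorisation $1-3t^2+2t^3=(1-t)^2(1+2t)$ reduces this to $(1+2t)/(1-t)^d$, i.e.\ the asserted formula with $s=1$.

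The remaining and hardest case is $d=2$ ($n=4$), where up to two entries may fail to be variables. Here I would classify the shapes of $A$ modulo the symmetries of the resolution (permuting the three rows, swapping the two columns, and relabelling variables): the requirement that all four variables appear forces the four ``variable'' entries into one of the three patterns (i), (ii), (iii) of the statement. In patterns (i) and (ii) the homogeneity and coprimality of the minors pin the two remaining entries down to variables too, returning the quadric resolution and $H(R,t)=(1+2t)/(1-t)^2$. In pattern (iii) the first two rows carry the four variables, and coprimality of the two minors built from the bottom row forces $u_3=x_3^{n_1}x_4^{m_1}$ and $u_6=x_1^{n_2}x_2^{m_2}$ with a common degree $N=n_1+m_1=n_2+m_2$; the resolution becomes $0 \to S(-N-2)^2 \to S(-N-1)^2\oplus S(-2) \to S \to R \to 0$, and the identity $1-t^2-2t^{N+1}+2t^{N+2}=(1-t)^2\bigl(1+2\sum_{i=1}^N t^i\bigr)$ turns the alternating sum into $\bigl(1+2\sum_{i=1}^N t^i\bigr)/(1-t)^2$, the claimed formula with $s=N$. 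I expect the main obstacle to be precisely this $d=2$ classification: proving that (i), (ii), (iii) exhaust the possibilities up to symmetry, and that the coprimality and homogeneity constraints determine the undetermined entries exactly as stated.
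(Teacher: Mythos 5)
Your proposal is correct and follows essentially the same route as the paper: present $R$ via Proposition 2.11 and the remark after it, use Hilbert--Burch plus primeness to make the minors coprime homogeneous binomials, convert nearly Gorensteinness into $I_1(A)=\mathbf{n}$ via Theorem 2.9(b) (forcing $n\leq 6$, hence $d\leq 4$), and then run the same case analysis, including the three patterns (i)--(iii) for $d=2$ and the identical Hilbert series computations. The one step you flag as the main obstacle (that the patterns exhaust all cases up to symmetry) is also only asserted, not proved in detail, in the paper itself.
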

\begin{rem}
We have already shown that
for any homogeneous affine semigroup ring $R$,
if it is nearly Gorenstein and $r(R)=2$,
then it is level,
the proof of Theorem 4.9 give the other proof in the case of $r(R)=\pd(R)=2$.
\end{rem}
For general homogeneous $d$-dimensional affine semigroup $R$,
nearly Gorensteinness does not imply the equation
$\displaystyle
H(R,t)=\frac{1+r(R)\sum_{i=1}^s{t^i}}{(1-t)^d}$.
Indeed, there are many counterexamples of $\pd(R) \geqq 4$.
\begin{ex}
$R=\kk[s,st^2,st^6,st^8,st^{11},st^{17},st^{23}]$ is nearly Gorenstein and
\[H(R,t)=\frac{1+5t+9t^2+6t^3+2t^4}{(1-t)^2}.\]
\end{ex}
However, for 2-dimensional homogeneous affine semigroup of projective dimension 3,
the following example exists.
\begin{ex}
$R=\kk[s,st^{2021},st^{2023},st^{4044},st^{6067}]$
is nearly Gorenstein and
\[H(R,t)=\frac{1+3\sum_{i=1}^{2022}t^i}{(1-t)^2}.\]
\end{ex}\section{nearly Gorenstein Stanley-Reisner rings}
The following example shows that
polarization does not necessarily preserve
nearly Gorensteinness.
Consider the polarization of Example 3.1.
\begin{ex}
Let $S=\QQ[x,y_1,y_2,y_3,z]$ be a polynomial ring with
$\deg x = \deg y_1= \deg y_2= \deg y_3= \deg z=1.$
Consider a homogeneous ideal $I=(xz,y_1 z,y_1 y_2 y_3)$ and define $R=S/I$,
then the graded minimal free resolution of $R$ is as follows.
\[0\rightarrow S(-3) \oplus S(-4) \xrightarrow {\left[
\begin{array}{cc}
-y_1 & 0 \\
x & -y_2y_3 \\
0 & z
\end{array}
\right]
}
S(-2)^{\oplus 2} \oplus S(-3) \rightarrow
 S \rightarrow R \rightarrow 0.\]
Thus $r(R)=2$ and $R$ is not level,
and $R$ is Cohen-Macaulay because $\dm R = \depth R = 3 > 0$.
Then, $R$ is not nearly Gorenstein by Theorem $2.9$.
\end{ex}
We recall some notation on simplicial complexes and Stanley-Reisner rings.
Let $\kk$ be a field and set $V=[n]=\{1,2,\cdots,n\}$.
A nonempty subset $\Delta$ of the power set $2^{V}$ of $V$
is called a \textit{simplicial complex} on $V$ if
$\{v\} \in \Delta$ for all $v \in V$, and $F \in \Delta$,
$H \subset F$ implies $H \in \Delta$.
For a face $F$ of $\Delta$, we put
$\link_\Delta(F) := \{ G \in \Delta \;;\; G\cup F \in \Delta, F\cap G = \emptyset  \}$.
This complex is called the \textit{link} of $F$.
The \textit{Stanley-Reisner ideal} of $\Delta$, denoted by $I_\Delta$
which is the squarefree monomial ideal generated by
\[\{x_{i_1}x_{i_2}\cdots x_{i_p} : 1\leq i_1 < \cdots < i_p \leq n,\;
\{x_{i_1},\cdots,x_{i_p}\} \notin \Delta \},\]
and $\kk[\Delta]=\kk[x_1,\cdots,x_n]/I_\Delta$ is called the
\textit{Stanley-Reisner ring} of $\Delta$.
Now we prepare some lemma.
\begin{lem}
Let $\Delta$ be a 0-dimensional simplicial complex consisting of $n \geq 2$ points,
and let $R=\kk[\Delta]=\kk[x_1,x_2,\cdots,x_n]/I_\Delta$ be the Stanley-Reisner ring of $\Delta$.
Then, the canonical ideal $I_R$ is generated by $\{x_1-x_2,x_1-x_3,\cdots,x_1-x_n\}$ and $R$ is
level and nearly Gorenstein.
\begin{proof}
Define an $R$-homomorphism $\phi:\kk[x_1]\oplus
\cdots \oplus \kk[x_n] \rightarrow \kk$ by
\begin{equation}
\begin{split} \notag
\phi( \;(f_1(x_1),f_2(x_2),\cdots,f_n(x_n))\; )=f_1(0)+f_2(0)+\cdots+f_n(0). \\
\text{for} \; \text{any}\;(f_1(x_1),f_2(x_2),\cdots,f_n(x_n)) \in
\kk[x_1]\oplus
\cdots \oplus \kk[x_n].
\end{split}
\end{equation}
First, we show that $\ker(\phi)$ is generated by
$(x_1,0,\cdots.0),(0,x_2,0,\cdots,0),\cdots, (0,\cdots,0,x_n)$ and
\;
$(1,-1,0,\cdots,0),(1,0,-1,0,\cdots,0),\cdots,(1,0,\cdots,0,-1)$
as an $R$-module.

For any $(f_1(x_1),f_2(x_2),\cdots,f_n(x_n)) \in \ker(\phi)$, we get the equality
$f_1(0)=-f_2(0)-\cdots-f_n(0)$.
We can write $f_i(x_i) = g_i(x_i)x_i + f_i(0)$ for any $1 \leq i \leq n$,
and putting $c_j = -f_j(0)$ for any $2 \leq j \leq n$, then we have
\begin{equation}\notag
\begin{split}
\begin{pmatrix} 
  f_1(x_1)  \\
  f_2(x_2)  \\
  \vdots  \\
  f_n(x_n) 
\end{pmatrix}
&=
\begin{pmatrix} 
  g_1(x_1)x_1 + c_2 + \cdots + c_n \\
  g_2(x_2)x_2 - c_2 \\
  \vdots  \\
  g_n(x_n)x_n - c_n
\end{pmatrix}
\\
&=g_1(x_1)
\begin{pmatrix} 
  x_1  \\
  0  \\
  \vdots  \\
  0 
\end{pmatrix}
+g_2(x_2)
\begin{pmatrix} 
  0  \\
  x_2  \\
  \vdots  \\
  0 
\end{pmatrix}
+\cdots
+g_n(x_n)
\begin{pmatrix} 
  0  \\
  \vdots  \\
  0 \\
  x_n 
\end{pmatrix}
\\
&+c_2
\begin{pmatrix} 
  1 \\
  -1 \\
  0 \\
  0 \\
  \vdots  \\
  0 
\end{pmatrix}
+c_3
\begin{pmatrix} 
  1  \\
  0  \\
  -1 \\
  0 \\
  \vdots  \\
  0
\end{pmatrix}
+\cdots
+c_n
\begin{pmatrix} 
  1  \\
  0 \\
  0 \\
  \vdots  \\
  0 \\
  -1
\end{pmatrix}.
\end{split}
\end{equation}
Thus, $\ker(\phi)$ is generated by
$(x_1,0,\cdots.0),\cdots, (0,\cdots,0,x_n)$
and
$(1,-1,0,\cdots,0),$\\
$(1,0,-1,0,\cdots,0),\cdots,(1,0,\cdots,0,-1)$
as an $R$-module.
Then, by [2, Section 5.7], we get
\[I_R=({x_1}^2, \cdots, {x_n}^2, x_1-x_2, x_1-x_3, \cdots,
x_1-x_n)=(x_1-x_2, x_1-x_3, \cdots,
x_1-x_n).\]
Thus, $R$ is level. We show that $R$ is nearly Gorenstein.
We define $g = (n-1)x_1-x_2-\cdots-x_n$.
Then, $g$ is a non-zero divisor of $R$.
Indeed, we assume $fg\equiv 0$ $(\text{mod}\;I_\Delta)$ for some $f \in R$.
Since $f \in R$, we can write $f(x_1,x_2,\cdots,x_n)\equiv x_1f_1(x_1)+x_2f_2(x_2)+\cdots+x_nf_n(x_n)+c$
$(\text{mod}\;I_\Delta)$, where $f_i \in k[x_i]$ for any $1 \leq i \leq n$ and $c \in \kk.$
Thus we have
\[fg\equiv(n-1)x_1(x_1f_1(x_1)+c)+\sum_{k=2}^n x_k(x_kf_k(x_k)+c)\equiv0 \;\; (\text{mod}\;I_\Delta).\]
From the definition of $I_\Delta$, we get the next equality in $\kk[x_1,x_2,\cdots,x_n]$.
\[(n-1)x_1(x_1f_1(x_1)+c)+\sum_{k=2}^n x_k(x_kf_k(x_k)+c)=0.\]
Thus, $x_if_k(x_i)+c = 0$ for any $1 \leq i \leq n$.
Therefore we get $c=0$ and $f(x_i)=0$ for any $1 \leq i \leq n$ and $f \equiv 0$ $(\text{mod}\;I_\Delta)$,
thus $g$ is a non-zero divisor of $R$.

Since $g$ is a non-zero divisor of $R$ and $g \in I_R$,
we have $\tr(\omega_R) = I_RI_R^{-1}$.
Here, for any $2 \leq i \leq n$ and for any $2 \leq k \leq n$, we get the following equality.
\begin{equation} \notag
\frac{(n-1)x_1}{g} (x_1-x_k)
=
\frac{(n-1){x_1}^2}{g}
=
\frac{x_1}{1},
\end{equation}

\begin{equation} \notag
\frac{x_i}{g} (x_1-x_k)
=
\begin{cases}
x_i & (i = k)\\
0 & (i \neq k).
\end{cases}
\end{equation}
Then we get $\displaystyle \frac{(n-1)x_1}{g},\frac{x_i}{g} \in I_R^{-1}$
for any $2 \leq i \leq n$
and $x_1,x_2,\cdots, x_n \in I_R{I_R}^{-1}=\tr(\omega_R)$. Therefore, $R$ is nearly Gorenstein.
\end{proof}
\end{lem}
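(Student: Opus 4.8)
The plan is to first make the ring completely explicit, then pin down the canonical ideal, after which levelness is immediate and nearly Gorensteinness is a direct computation with a carefully chosen nonzerodivisor. Since $\Delta$ is the $0$-dimensional complex on the vertices $\{1\},\dots,\{n\}$, every two-element subset is a non-face, so $I_\Delta=(x_ix_j : 1\le i<j\le n)$ and $R=\kk[x_1,\dots,x_n]/(x_ix_j : i\ne j)$ is the coordinate-axis ring. First I would record that $R$ is a reduced, $1$-dimensional Cohen--Macaulay ring whose normalization is $\overline R=\kk[x_1]\oplus\cdots\oplus\kk[x_n]$, and that $R$ embeds in $\overline R$ as the fiber product $\{(f_1(x_1),\dots,f_n(x_n)) : f_1(0)=\cdots=f_n(0)\}$, with $x_i$ corresponding to the tuple equal to $x_i$ in the $i$-th slot and $0$ elsewhere. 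This concrete picture is what makes the remaining steps computable.

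Next I would compute the canonical ideal. The device is the $R$-linear functional $\phi\colon\overline R\to\kk$, $\phi(f_1,\dots,f_n)=\sum_i f_i(0)$, where $\kk$ is given the $R$-module structure $R\to R/\mathbf{m}=\kk$; appealing to the structural description of the canonical module of a Stanley--Reisner ring, one identifies $\omega_R$ with $\ker\phi$ as an $R$-module. The real work here is to produce an explicit $R$-module generating set of $\ker\phi$: every element of $\ker\phi$ can be rewritten, by splitting off constant terms, in terms of the axis elements together with the diagonal differences $(1,-1,0,\dots,0),\dots,(1,0,\dots,0,-1)$. Translating back through this identification gives $I_R=(x_1^2,\dots,x_n^2,\,x_1-x_2,\dots,x_1-x_n)$, and since $x_i^2=x_i(x_i-x_1)\in(x_1-x_2,\dots,x_1-x_n)$ in $R$ (using $x_ix_j=0$ for $i\ne j$), I obtain the clean form $I_R=(x_1-x_2,\dots,x_1-x_n)$. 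Levelness then follows at once, because all of the listed generators are homogeneous of degree $1$, so the minimal generators of $\omega_R\cong I_R$ all lie in a single degree.

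For nearly Gorensteinness I would use the identity $\tr(\omega_R)=I_RI_R^{-1}$, which is valid as soon as $I_R$ contains a homogeneous nonzerodivisor, so it suffices to show $x_1,\dots,x_n\in I_RI_R^{-1}$, that is $\mathbf{m}\subseteq\tr(\omega_R)$. I would take $g=(n-1)x_1-x_2-\cdots-x_n\in I_R$ and first verify that it is a nonzerodivisor: writing a general element of $R$ as $c+\sum_i x_if_i(x_i)$ and expanding the product with $g$ while using $x_ix_j=0$ forces all coefficients to vanish whenever the product is zero. I would then exhibit $\frac{(n-1)x_1}{g}$ and $\frac{x_i}{g}$ (for $2\le i\le n$) as elements of $I_R^{-1}$ by checking that they carry $I_R$ into $R$, and compute $\frac{(n-1)x_1}{g}(x_1-x_k)=x_1$ and $\frac{x_i}{g}(x_1-x_k)=x_i$ or $0$ according as $i=k$ or $i\ne k$. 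These identities realize every variable $x_i$ inside $I_RI_R^{-1}=\tr(\omega_R)$, which gives $\mathbf{m}\subseteq\tr(\omega_R)$ and hence nearly Gorensteinness.

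The hard part will be the canonical-ideal computation, namely correctly identifying $\omega_R$ with $\ker\phi$ via the Stanley--Reisner description and then extracting from it a minimal homogeneous generating set; a secondary technical point is the verification that $g$ is a nonzerodivisor. Everything downstream---levelness, and the nearly Gorenstein check through the explicit fractions above---is a bounded, essentially mechanical computation.
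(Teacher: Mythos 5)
Your proposal is correct and follows essentially the same route as the paper's own proof: the same functional $\phi(f_1,\dots,f_n)=\sum_i f_i(0)$ on $\kk[x_1]\oplus\cdots\oplus\kk[x_n]$ identifying $\omega_R$ with $\ker\phi$, the same kernel generators yielding $I_R=(x_1-x_2,\dots,x_1-x_n)$, the same nonzerodivisor $g=(n-1)x_1-x_2-\cdots-x_n$, and the same fractions $\frac{(n-1)x_1}{g}$, $\frac{x_i}{g}$ realizing $\mathbf{m}\subseteq\tr(\omega_R)$. Your extra remarks (the fiber-product picture of $R$ in its normalization, and the explicit reduction $x_i^2\in(x_1-x_i)$ in $R$) only make explicit what the paper leaves implicit.
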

\begin{lem}
Let $\Delta$ be a path with $n \geq 2$ edges and $n+1$ vertices,
and let $R=\kk[\Delta]=\kk[x_1,x_2,\cdots,x_n,x_{n+1}]/I_\Delta$ be the Stanley-Reisner ring of $\Delta$.
Then, the canonical ideal $I_R$ is generated by $\{x_1{x_2}^2+{x_2}^2x_3,x_2{x_3}^2+{x_3}^2x_4,\cdots,x_{n-1}{x_n}^2+{x_n}^2x_{n+1}\}$ and
$R$ is level and nearly Gorenstein.
\begin{proof}
Define an $R$-homomorphism
$\phi:\kk[x_1,x_2]\oplus \kk[x_2,x_3] \oplus
\cdots \oplus \kk[x_n,x_{n+1}] \rightarrow \kk[x_1]\oplus
\cdots \oplus \kk[x_n] \oplus \kk[x_{n+1}]$ by
\begin{equation} \notag
\begin{split}
\phi
\begin{pmatrix} 
  f_1(x_1,x_2)  \\
  f_2(x_2,x_3)  \\
  \vdots  \\
  f_{n-1}(x_{n-1},x_{n}) \\
  f_n(x_n,x_{n+1})
\end{pmatrix}
&=
\begin{pmatrix} 
  -f_1(x_1,0)  \\
  f_1(0,x_2)-f_2(x_2,0) \\
  \vdots  \\
  f_{n-1}(0,x_n)-f_n(x_n,0) \\
  f_n(0,x_{n+1})
\end{pmatrix}
. \\
\text{for any}
\begin{pmatrix} 
  f_1(x_1,x_2)  \\
  f_2(x_2,x_3)  \\
  \vdots  \\
  f_{n-1}(x_{n-1},x_{n}) \\
  f_n(x_n,x_{n+1})
\end{pmatrix}
& \in
\kk[x_1,x_2]\oplus \kk[x_2,x_3] \oplus
\cdots \oplus \kk[x_n,x_{n+1}].
\end{split}
\end{equation}
First, we show that $\ker(\phi)$ is generated by
$(x_1x_2,0,\cdots.0),\cdots,(0,\cdots,0,x_nx_{n+1})$
and \\
$(x_2,x_2,0,\cdots,0),(0,x_3,x_3,0,\cdots,0),\cdots,(0,\cdots,0,x_n,x_n)$
as an $R$-module.

For any $(f_1(x_1,x_2),f_2(x_2,x_3),\cdots,f_n(x_{n-1},x_n),f_n(x_n,x_{n+1})) \in \ker(\phi)$, we get the equality
\;
$f_1(x_1,0)=0$,
$f_{k-1}(0,x_k)=f_k(x_k,0)$ for any $2 \leq k \leq n$,
and $f_{n}(0,x_{n+1})=0.$
By $f_1(x_1,0)=0$, we can write
\begin{equation}
f_1(x_1,x_2)=x_2g_1(x_1,x_2).
\end{equation}
Indeed, since $f_1(x_1,x_2) \in \kk[x_2][x_1]$,
we can write $f_1(x_1,x_2)=x_2g_1(x_1,x_2)+c_1(x_1)$
and
$c_1(x_1) = f_1(x_1,0) = 0,$
thus $f_1(x_1,x_2)=x_2g_1(x_1,x_2).$
Next, by $f_{k-1}(0,x_k)=f_k(x_k,0)$, we have
\begin{equation}
f_k(x_k,x_{k+1})=x_{k+1}g_k(x_k,x_{k+1})+x_kg_{k-1}(0,x_k). \;\;(\text{for any} \;2 \leq k \leq n.)
\end{equation}
Indeed, since $f_k(x_k,x_{k+1}) \in \kk[x_{k+1}][x_k]$,
we can write $f_k(x_k,x_{k+1})=x_{k+1}g_k(x_k,x_{k+1})+c_k(x_k)$
and $c_k(x_k) = f_k(x_k,0) = f_{k-1}(0,x_k)=x_kg_{k-1}(0,x_k),$
thus $f_k(x_k,x_{k+1})=x_{k+1}g_k(x_k,x_{k+1})+x_kg_{k-1}(0,x_k).$
Finally, since $f_{n}(0,x_{n+1})=0$ and $g_1(x_1,x_2) \in \kk[x_1][x_2],\cdots
,g_{n-1}(x_{n-1},x_n) \in \kk[x_{n-1}][x_n]$, we can write
\begin{equation}
\begin{array}{l}
g_n(x_n,x_{n+1})=x_n h_n(x_n,x_{n+1}),\\
g_{k-1}(x_{k-1},x_k)=x_{k-1}h_{k-1}(x_{k-1},x_k)+g_{k-1}(0,x_k).\;\;(\text{for all} \;2 \leq k \leq n.)
\end{array}
\end{equation}
By equalities $(4),\;(5),\;(6),$ we have

\begin{equation}\notag
\begin{split}
\begin{pmatrix} 
  f_1(x_1,x_2)  \\
  f_2(x_2,x_3)  \\
  \vdots  \\
  f_{n-1}(x_{n-1},x_{n}) \\
  f_n(x_n,x_{n+1})
\end{pmatrix}
&=
\begin{pmatrix} 
  x_2g_1(x_1,x_2)  \\
  x_3g_2(x_2,x_3)+x_2g_1(0,x_2)  \\
  \vdots  \\
  x_ng_{n-1}(x_{n-1},x_{n})+x_{n-1}g_{n-2}(0,x_{n-1}) \\
  x_{n+1}g_n(x_n,x_{n+1})+x_ng_{n-1}(0,x_n)
\end{pmatrix}
\\
&=
\begin{pmatrix} 
  x_1x_2h_1(x_1,x_2)+x_2g_1(0,x_2)  \\
  x_2x_3h_2(x_2,x_3)+x_3g_2(0,x_3) + x_2g_1(0,x_2) \\
  \vdots  \\
  x_{n-1}x_nh_{n-1}(x_{n-1},x_n)+x_ng_{n-1}(0,x_n) + x_{n-1}g_{n-2}(0,x_{n-1}) \\
  x_{n}x_{n+1}h_n(x_n,x_{n+1})+x_ng_{n-1}(0,x_n)
\end{pmatrix}
\\
&=h_1(x_1,x_2)
\begin{pmatrix} 
  x_1x_2  \\
  0 \\
  0 \\
  \vdots  \\
  0 
\end{pmatrix}
+h_2(x_2,x_3)
\begin{pmatrix} 
  0  \\
  x_2x_3  \\
  0 \\
  \vdots  \\
  0 
\end{pmatrix}
+\cdots
+h_n(x_n,x_{n+1})
\begin{pmatrix} 
  0  \\
  \vdots  \\
  0\\
  0\\
  x_nx_{n+1}
\end{pmatrix}
\\
&+g_1(0,x_2)
\begin{pmatrix} 
  x_2 \\
  x_2 \\
  0 \\
  0 \\
  \vdots  \\
  0 
\end{pmatrix}
+g_2(0,x_3)
\begin{pmatrix} 
  0  \\
  x_3  \\
  x_3 \\
  0 \\
  \vdots  \\
  0
\end{pmatrix}
+\cdots
+g_{n-1}(0,x_n)
\begin{pmatrix} 
  0 \\
  0 \\
  \vdots  \\
  0 \\
  x_n \\
  x_n
\end{pmatrix}.
\end{split}
\end{equation}
Thus, $\ker(\phi)$ is generated by
$(x_1x_2,0,\cdots.0),(0,x_2x_3,0,\cdots,0),\cdots, (0,\cdots,0,x_nx_{n+1})$
and
\\
$(x_2,x_2,0,\cdots,0),(0,x_3,x_3,0,\cdots,0),\cdots,(0,\cdots,0,x_n,x_n)$
as an $R$-module.

Then, By [2, Section 5.7], we get
\begin{equation}\notag
\begin{split}
I_R&=({x_1}^2{x_2}^2, {x_2}^2{x_3}^2, \cdots, {x_n}^2{x_{n+1}}^2,
x_1{x_2}^2+{x_2}^2x_3, x_2{x_3}^2+{x_3}^2x_4, \cdots, x_{n-1}{x_n}^2+{x_n}^2x_{n+1})\\
&=(x_1{x_2}^2+{x_2}^2x_3, x_2{x_3}^2+{x_3}^2x_4, \cdots, x_{n-1}{x_n}^2+{x_n}^2x_{n+1}).
\end{split}
\end{equation}
Thus, $R$ is level. We show that $R$ is nearly Gorenstein.
We define $g = \sum_{k=1}^n {x_k}^2{x_{k+1}}^2$.
Then, in the same way as
Lemma 5.2,
we can show $g$ is a non-zero divisor of $R$.


Since $g$ is a non-zero divisor of $R$ and $g \in I_R$,
we have $\tr(\omega_R) = I_RI_R^{-1}$.
Here, for any $1 \leq k \leq n-1$, we get
\begin{equation} \notag
\frac{{x_1}^2}{g} (x_{k}{x_{k+1}}^2+{x_{k+1}}^2x_{k+2})
=
\begin{cases}
x_1 & (k=1)\\
0 & (k:\text{otherwise}),
\end{cases}
\end{equation}
\begin{equation} \notag
\frac{{x_{n+1}}^2}{g} (x_{k}{x_{k+1}}^2+{x_{k+1}}^2x_{k+2})
=
\begin{cases}
x_{n+1} & (k=n-1)\\
0 & (k:\text{otherwise}),
\end{cases}
\end{equation}
\begin{equation} \notag
\frac{\sum_{j=1}^n x_jx_{j+1}}{g} (x_{k}{x_{k+1}}^2+{x_{k+1}}^2x_{k+2})
=
\frac{{x_{k}}^2{x_{k+1}}^3+{x_{k+1}}^3{x_{k+2}}^2}{g}
=\frac{x_{k+1}}{1}.
\end{equation}

Then we get \[\frac{{x_1}^2}{g},\frac{{x_{n+1}}^2}{g},\frac{\sum_{j=1}^n x_jx_{j+1}}{g} \in I_R^{-1}\]
and $x_1,x_2,\cdots, x_n, x_{n+1} \in I_R{I_R}^{-1}=\tr(\omega_R)$. Therefore, $R$ is nearly Gorenstein.
\end{proof}
\end{lem}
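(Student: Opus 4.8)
The plan is to split the statement into three tasks: determine the canonical ideal $I_R$ explicitly, read off levelness from its generators, and then verify near-Gorensteinness by exhibiting enough elements of $I_R^{-1}$. The first task is where the real work lies; the other two follow formally once $I_R$ is in hand.

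For the canonical ideal I would use that $\Delta$ is a pure, connected $1$-dimensional (hence Cohen--Macaulay) complex whose facets are exactly the edges $\{k,k+1\}$, so that the canonical module of $R$ admits the combinatorial presentation of \cite[Section 5.7]{BH}. Concretely, I would realize $\omega_R$ via the kernel of the natural ``difference at shared vertices'' map
\[
\phi:\bigoplus_{k=1}^{n}\kk[x_k,x_{k+1}]\longrightarrow\bigoplus_{j=1}^{n+1}\kk[x_j],
\]
which restricts each edge-function to its two endpoints, takes the difference at every internal vertex, and a plain restriction at the two free ends $x_1,x_{n+1}$. The first genuine step is to compute $\ker\phi$ by hand: a kernel element must restrict to $0$ at $x_1$ and $x_{n+1}$ and must agree at each internal vertex $x_2,\dots,x_n$, and solving these compatibility conditions by a telescoping argument over the internal vertices lets me write an arbitrary kernel element as an $R$-combination of the ``edge'' generators $x_kx_{k+1}$ placed in the $k$-th slot and the ``matching'' generators carrying $x_k$ in the two slots of the edges meeting at $x_k$. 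Transporting these through the isomorphism of \cite[Section 5.7]{BH} (which multiplies the $k$-th slot by the edge monomial $x_kx_{k+1}$) and discarding the redundant monomial generators then yields $I_R=(x_1x_2^2+x_2^2x_3,\ x_2x_3^2+x_3^2x_4,\ \dots,\ x_{n-1}x_n^2+x_n^2x_{n+1})$. Levelness is then immediate, since every one of these generators has degree $3$, so all minimal generators of $\omega_R$ lie in a single degree, which is exactly Definition 2.1.

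For near-Gorensteinness I would follow the $0$-dimensional case (Lemma 5.2). I take the candidate nonzerodivisor $g=\sum_{k=1}^n x_k^2x_{k+1}^2\in I_R$; that $g$ is a nonzerodivisor is proved just as in Lemma 5.2, by writing a general element of $R$ through its vertex- and edge-supported components and checking that $fg\equiv 0$ forces $f\equiv 0$ modulo $I_\Delta$. Since $g$ is a nonzerodivisor lying in $I_R$, equation~(1) gives $\tr(\omega_R)=I_RI_R^{-1}$, so it suffices to produce enough of $I_R^{-1}$. I would verify directly that $x_1^2/g$, $x_{n+1}^2/g$, and $\bigl(\sum_{j}x_jx_{j+1}\bigr)/g$ each lie in $I_R^{-1}$ by multiplying them against the generators of $I_R$ and checking that the products land in $R$; reading off those products shows that every variable $x_1,\dots,x_{n+1}$ lies in $I_RI_R^{-1}=\tr(\omega_R)$. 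Hence $\mathbf{m}\subseteq\tr(\omega_R)$, and $R$ is nearly Gorenstein by Definition 2.2.

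I expect the main obstacle to be the first step, the explicit identification of $I_R$: one must set up the correct presenting map $\phi$, carry out the matching bookkeeping at the internal vertices carefully, and translate a presentation of $\ker\phi$ into a minimal generating set of the canonical ideal through \cite[Section 5.7]{BH}. Once $I_R$ is known the remaining two assertions are routine --- levelness is trivial, and near-Gorensteinness reduces to the concrete verification that the three guessed fractions lie in $I_R^{-1}$.
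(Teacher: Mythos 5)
Your proposal is correct and follows essentially the same route as the paper's own proof: the same presenting map $\phi$ (restriction to endpoints with differences at internal vertices), the same kernel computation yielding the edge generators $x_kx_{k+1}$ and matching generators $x_k$, the same passage through \cite[Section 5.7]{BH} to obtain $I_R$, and even the same nonzerodivisor $g=\sum_k x_k^2x_{k+1}^2$ and the same three fractions $x_1^2/g$, $x_{n+1}^2/g$, $\bigl(\sum_j x_jx_{j+1}\bigr)/g$ witnessing near-Gorensteinness. No substantive differences to note.
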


Now, we can characterize nearly Gorensteinness of the Stanley-Reisner rings of low-dimensional simplicial complexes.

\begin{thm}\label{C}

(a) Let $\Delta$ be a 0-dimensional simplicial complex
and let $R=\kk[\Delta]$ be the Stanley-Reisner ring of $\Delta$.
Then $R$ is nearly Gorenstein and level.

\noindent(b) Let $\Delta$ be a 1-dimensional connected simplicial complex.
Then the following conditions are equivalent:

(1) $\Delta$ is nearly Gorenstein;

(2) $\Delta$ is Gorenstein on the punctured spectrum;

(3) $\Delta$ is locally Gorenstein
(i.e., $\kk[\link_\Delta(\{i\})]$ is Gorenstein for every $i \in V$);

(4) $\Delta$ is a path or a cycle.

\noindent(c) Every 1-dimensional nearly Gorenstein simplicial complex is almost Gorenstein and level.

\begin{proof}
(a) follows from Lemma 5.2. We show (b).
$(1) \Rightarrow (2)$ is known (see Remark 2.4).
$(2) \Rightarrow (3)$ follows from the isomorphism
$R_{x_i} \cong \kk[\link_\Delta(\{i\})][x_i,{x_i}^{-1}].$
We show $(3) \Rightarrow (4)$.
If we assume $(3)$,
then $\link_\Delta(\{i\})$ is one point or two points for every $i \in V$.
Thus, $\Delta$ is a path or a cycle.
Since a cycle is Gorenstein,
the implication $(4) \Rightarrow (1)$ follows from Lemma 5.3.
Next, we show (c).
For a 1-dimensional simplicial complex $\Delta$,
$\kk[\Delta]$ is almost Gorenstein
if and only if $\Delta$ is a tree or a ridge sum of cycles
(see \cite[Proposition 3.8]{MM}).
Since a path is level by Lemma 5.3,
1-dimensional nearly Gorenstein simplicial complex (a path or a cycle) is almost Gorenstein and level.
 \end{proof}

\end{thm}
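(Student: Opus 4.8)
The plan is to dispatch the three parts in turn, leaning on the explicit structural computations in Lemmas 5.2 and 5.3 together with the general trace-theoretic facts of Section 2. Part (a) is immediate: a $0$-dimensional complex is exactly the situation of Lemma 5.2, which already produces the canonical ideal $I_R=(x_1-x_2,\dots,x_1-x_n)$ with all generators in degree one (so $R$ is level) and the non-zerodivisor $g=(n-1)x_1-x_2-\cdots-x_n\in I_R$ witnessing $x_1,\dots,x_n\in\tr(\omega_R)$ (so $R$ is nearly Gorenstein).

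For part (b) I would argue cyclically. The implication $(1)\Rightarrow(2)$ is precisely Remark 2.4. For $(2)\Rightarrow(3)$ I would invoke the standard localization isomorphism $R_{x_i}\cong\kk[\link_\Delta(\{i\})][x_i,x_i^{-1}]$: since the graded maximal ideal $\mathbf{m}$ contains every $x_i$, every prime of the localization $R_{x_i}$ comes from a prime of $R$ different from $\mathbf{m}$, so Gorensteinness on the punctured spectrum forces $R_{x_i}$ to be Gorenstein, and Gorensteinness then descends through the Laurent extension to $\kk[\link_\Delta(\{i\})]$. For $(3)\Rightarrow(4)$ the key point is that in a $1$-dimensional complex each link $\link_\Delta(\{i\})$ is $0$-dimensional, namely the set of neighbours of $i$; by the generator count in Lemma 5.2 a $0$-dimensional Stanley-Reisner ring on $m$ points has Cohen-Macaulay type $m-1$, hence is Gorenstein exactly when $m\le 2$. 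Thus local Gorensteinness says every vertex has degree $1$ or $2$, and a connected graph all of whose degrees lie in $\{1,2\}$ is a path or a cycle. Finally $(4)\Rightarrow(1)$ splits into two cases already handled: a path is nearly Gorenstein by Lemma 5.3, while a cycle triangulates $S^1$ and so its Stanley-Reisner ring is Gorenstein, which trivially implies nearly Gorenstein.

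For part (c) I would combine (b) with the almost-Gorenstein classification of Matsuoka and Murai. By (b) a $1$-dimensional nearly Gorenstein complex is a path or a cycle; a path is a tree and a cycle is a single ridge sum of cycles, so in either case $\kk[\Delta]$ is almost Gorenstein by \cite[Proposition 3.8]{MM}. Levelness is equally quick: a path is level by Lemma 5.3, and a cycle, being Gorenstein, is level.

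The step I expect to require the most care is the pair $(2)\Rightarrow(3)$ and $(3)\Rightarrow(4)$. One must check that the localization $R_{x_i}$ genuinely recovers the link and that Gorensteinness transfers correctly across the Laurent polynomial extension, and one must pin down exactly when a $0$-dimensional Stanley-Reisner ring is Gorenstein---it is the clean answer ``at most two points'' that yields the degree-$\le 2$ constraint and hence the path/cycle dichotomy. Everything else reduces to a direct citation or to reuse of the two preceding lemmas.
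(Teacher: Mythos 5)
Your proposal is correct and follows essentially the same route as the paper: part (a) via Lemma 5.2, part (b) via Remark 2.4, the localization isomorphism $R_{x_i} \cong \kk[\link_\Delta(\{i\})][x_i,x_i^{-1}]$, and Lemma 5.3 plus Gorensteinness of cycles, and part (c) via the Matsuoka--Murai classification. The only difference is that you supply slightly more justification than the paper at two points (the descent of Gorensteinness across the Laurent extension, and the type count from Lemma 5.2 showing a $0$-dimensional complex is Gorenstein exactly on at most two points), which is a welcome but not structurally different elaboration.
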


\begin{rem}
Fix an integer $n \geq 2$.
Let $\Delta$ be a $n$-dimensional Cohen-Macaulay simplicial complex.
The same argument as $(1) \Rightarrow (2) \Rightarrow (3)$ in (b) of
Theorem 5.4 also holds.
Thus, if $\Delta$ is nearly Gorenstein,
then $\Delta$ is locally Gorenstein.
However, the converse does not hold in general.
\end{rem}

\begin{ex}
Let $\Delta$ be the standard triangulation
of the real projective plane ({see \cite[Figure 5.8]{BH}}).
Then $\mathbb{Q}[\Delta]$ is Cohen-Macaulay and locally Gorenstein.
However, this is not nearly Gorenstein because $\tr(\omega_{\mathbb{Q}[\Delta]})={\mathbf{m}}^2$.
Thus, this is also Gorenstein on the punctured spectrum.
Indeed, we put $S=\mathbb{Q}[x_1,\cdots,x_6]$ and
\[I_\Delta = (x_1x_2x_3,x_1x_2x_4,x_1x_3x_5,x_1x_4x_6,x_1x_5x_6,x_2x_3x_6,x_2x_4x_5,x_2x_5x_6,x_3x_4x_5,x_3x_4x_6).\]
Then the graded minimal free resolution of $\mathbb{Q}[\Delta]=S/I_\Delta$ is as follows.
\[0\rightarrow S(-5)^{\oplus 6} \xrightarrow {A
}
S(-4)^{\oplus 15} \rightarrow
 S(-3)^{\oplus 10} \rightarrow \mathbb{Q}[\Delta] \rightarrow 0.\]
 Thus we can check $\textrm{Ker}(\mathbb{Q}[\Delta] \otimes A)$ is generated
 by the following six column vectors by using $\mathtt{Macaulay2}$ ({\cite{M2}}).
 \[\left[
\begin{array}{cccccc}
{x_6}^2 \\
x_5x_6 \\
-x_4x_6 \\
-x_3x_6 \\
x_2x_6 \\
x_1x_6
\end{array}
\right],
\left[
\begin{array}{cccccc}
-x_5x_6 \\
-{x_5}^2 \\
x_4x_5 \\
x_3x_5 \\
x_2x_5 \\
x_1x_5
\end{array}
\right],
\left[
\begin{array}{cccccc}
-x_4x_6 \\
-x_4x_5 \\
{x_4}^2 \\
-x_3x_4 \\
-x_2x_4 \\
x_1x_4
\end{array}
\right],
\left[
\begin{array}{cccccc}
x_3x_6 \\
x_3x_5 \\
x_3x_4 \\
-{x_3}^2 \\
-x_2x_3 \\
x_1x_3
\end{array}
\right],
\left[
\begin{array}{cccccc}
x_2x_6 \\
-x_2x_5 \\
-x_2x_4 \\
x_2x_3 \\
{x_2}^2 \\
x_1x_2
\end{array}
\right],
\left[
\begin{array}{cccccc}
x_1x_6 \\
-x_1x_5 \\
x_1x_4 \\
-x_1x_3 \\
x_1x_2 \\
{x_1}^2
\end{array}
\right]
\in {\mathbb{Q}[\Delta]}^6\]

Then, we get $\tr(\omega_{\mathbb{Q}[\Delta]})={\mathbf{m}}^2$ by Theorem 2.9.
\end{ex}

\begin{Q}
Let $n \geq 2$ be an integer.
Is there any n-dimensional simplicial complex
such that it is nearly Gorenstein but not Gorenstein?
Are Gorensteinness and nearly Gorensteinness perhaps the same in this case?
\end{Q}

\renewcommand{\refname}{References}


\begin{thebibliography}{99}
\bibitem[1]{Alm1} V. Barucci and R. Froberg, One-dimensional almost Gorenstein rings, $\textit{J. Algebra}$ $\textbf{188}$ (1997), 418-442.
\bibitem[2]{BH} W. Bruns and J. Herzog, Cohen-Macaulay rings, Revised Edition, Cambridge University Press, 1998.
\bibitem[3]{Pseudo} V. Ene, J. Herzog, T. Hibi and S. S. Madani, Pseudo-Gorenstein and level Hibi rings, $\textit{J. Algebra}$ $\textbf{431}$ (2015), 138-161.
\bibitem[4]{Alm2} S. Goto and N. Matsuoka, T.T. Phuong, Almost Gorenstein rings, $\textit{J. Algebra}$ $\textbf{379}$ (2013), 355-381.
\bibitem[5]{Aff1} S. Goto, N. Suzuki and K. Watanabe, On affine semigroup rings, $\textit{ Japan. J. Math.}$ $\textbf{2}$ (1976), 1-12.
\bibitem[6]{Alm3} S. Goto and R. Takahashi, N. Taniguchi, Almost Gorenstein rings - towards a theory of higher dimension, $\textit{J. Pure Appl. Algebra}$ $\textbf{219}$ (2015), 2666-2712.
\bibitem[7]{M2} D. Grayson and M. Stillman. Macaulay2, a software system for research in algebraic geometry, Available at $\mathtt{http://www.math.uiuc.edu/Macaulay2/}$.
\bibitem[8]{Trace} J. Herzog, T. Hibi and D.I. Stamate, Canonical trace ideal and residue for numerical semigroup rings, $\textit{Semigroup Forum}$ $\textbf{103}$ (2021), 550-566.
\bibitem[9]{NG} J. Herzog, T. Hibi and D.I. Stamate, The trace of the canonical module, $\textit{Israel J. Math.}$ $\textbf{233}$ (2019), 133-165.
\bibitem[10]{H1} T. Hibi, Level rings and algebras with straightening laws, $\textit{J.Algebra}$ $\textbf{117}$ (1988), 343-362.
\bibitem[11]{H2} T. Hibi and D.I. Stamate, Nearly Gorenstein rings arising from finite graphs, $\textit{Electron.J.Combin}$ $\textbf{28}$ (2021), 11pp.
\bibitem[12]{Alm4} A. Higashitani, Almost Gorenstein homogeneous rings and their $h$-vectors, $\textit{J. Algebra}$ $\textbf{456}$ (2016), 190-206.
\bibitem[13]{HM} A. Higashitani and K. Matsushita, Levelness versus almost Gorensteinness of edge rings of complete multipartite graphs, $\textit{Comm.Algebra}$ $\textbf{50}$ (2022), 2637-2652.
\bibitem[14]{Lk} L. Katth\"an, Non-normal affine monoid algebras, $\textit{Manuscripta Mathematica}$ $\textbf{146}$ (2015), 223-233.
\bibitem[15]{MM} N. Matsuoka and S. Murai, Uniformly Cohen-Macaulay simplicial complexes, $\textit{J. Algebra}$ $\textbf{455}$ (2016), 14-31.
\bibitem[16]{MS} E. Miller and B. Sturmfels, Combinatorial Commutative Algebra, Springer, 2005.
\bibitem[17]{Miy1} M. Miyazaki, On the Gorenstein property of the Ehrhart ring of the stable set polytope of an h-perfect graph, $\textit{Int. Electron. J. Algebra}$ $\textbf{30}$ (2021), 269-284.
\bibitem[18]{Miy2} M. Miyazaki, A sufficient condition for a Hibi ring to be level and levelness of Schubert cycles, $\textit{Comm. Algebra}$ $\textbf{35}$ (2007), 2894-2900.
\bibitem[19]{IP} I. Peeva and B. Sturmfels, Syzygies of codimension 2 lattice ideals, $\textit{Mathematische Zeitschrift}$ $\textbf{229}$ (1998), 163-194
\bibitem[20]{Sta} R.P. Stanley, Combinatorics and commutative algebra, Second edition, Progr. Math., vol. 41, Birkh\"auser, Boston, 1996.
\end{thebibliography}
\end{document}